\newcommand\dela[1]{}
\def\XXint#1#2#3{{\setbox0=\hbox{$#1{#2#3}{\int}$ }
\vcenter{\hbox{$#2#3$ }}\kern-.6\wd0}}
\newtheorem{theorem}{Theorem}
\numberwithin{equation}{section}
\numberwithin{theorem}{section}
\newtheorem{lemma}[theorem]{Lemma}
\newtheorem{cor}[theorem]{Corollary}
\newtheorem{remark}[theorem]{Remark}
\newtheorem{definition}[theorem]{Definition}
\numberwithin{equation}{section}
\newcommand{\mf}{\mathbf}
\newcommand{\wt}{\widetilde}
\newcommand{\e}{\varepsilon}
\def\R{\mathbb{ R}}
\def\Z{\mathbb{ Z}}
\def\E{\mathbb{E}}
\def\o{\overline}
\begin{document}


\title{Homogenization of the evolution Stokes equation in a perforated domain with a stochastic Fourier boundary condition}

\author[H. Bessaih]{H. Bessaih}
\address{University of Wyoming, Department of Mathematics, Dept. 3036, 1000
East University Avenue, Laramie WY 82071, United States}
\email{ bessaih@uwyo.edu}

\author[Y. Efendiev]{ Y. Efendiev}
\address{Department of Mathematics \& ISC,
Texas A\&M University,
And,
Numerical Porous Media SRI Center, CEMSE Division,
King Abdullah University of Science and Technology, 
Thuwal 23955-6900,
Kingdom of Saudi Arabia}
\email{efendiev@math.tamu.edu}

\author[F. Maris]{ F. Maris}
\address{Numerical Porous Media SRI Center, CEMSE Division,
King Abdullah University of Science and Technology, 
Thuwal 23955-6900,
Kingdom of Saudi Arabia}
\email{florinmaris@gmail.com}

\maketitle


{\footnotesize
\begin{center}


\end{center}
}
\begin{abstract}
The evolution Stokes equation in a perforated domain subject to Fourier boundary condition on the boundaries of the holes is considered. We assume that the dynamic is driven by a stochastic perturbation on the interior of the domain and another stochastic perturbation on the boundaries of the holes.  The macroscopic (homogenized) equation is derived as another stochastic partial differential equation, defined in the whole non perforated domain. Here, the initial stochastic perturbation on the boundary becomes part of the homogenized equation as another stochastic force. We use the two-scale convergence method after extending the solution with 0 in the wholes to pass to the limit. By It\^o stochastic calculus, we get uniform estimates on the solution in appropriate spaces. In order to pass to the limit on the boundary integrals, we rewrite them in terms of  integrals  in the whole domain. In particular, for the stochastic integral on the boundary, we combine the previous idea of rewriting it on the whole domain with the assumption that the Brownian motion is of trace class. Due to the particular boundary condition dealt with, we get that the solution of the stochastic homogenized equation is not divergence free. However, it is coupled with the cell problem that has a divergence free solution. This paper represents an extension of the results from of Duan and Wang (Comm. Math. Phys. 275:1508--1527, 2007), where a reaction diffusion equation with a dynamical boundary condition with a noise source term on both the interior of the domain and on the boundary was studied, and through a tightness argument and a pointwise two scale convergence method the homogenized equation was derived.
\end{abstract}
{\bf Keywords:}  Homogenization, Perforated medium, Stochastic boundary condition, Stokes flows.\\
\\
\\
{\bf Mathematics Subject Classification 2000}: Primary 60H15, 76M50,
60H30; Secondary 76D07, 76M35.

\maketitle

\section{Introduction and formulation of the problem }
In this paper, we are interested in a fluid flow where the advective inertial forces are small compared with viscous forces. Starting from the evolution Stokes equations in a periodic porous medium, with a dynamical boundary condition driven by a noise source on the solid pores, the homogenized dynamic is rigorously recovered by the use of two-scale convergence method.

The homogenization of the Stokes problem in perforated domains goes back to S{\'a}nchez-Palencia in \cite{SP80} where an asymptotic expansion method was used. Rigorous proofs were given later by Tartar in \cite{T80} by the energy method and by Allaire in \cite{A92} where two scale convergence method was used. The two scale convergence was first introduced by Nguetseng in 1989 in \cite{N89} and later developed by Allaire in \cite{A-2s}. The idea of this method was to give a rigorous justification to the asymptotic expansion method. A more general setting has been defined by Nguetseng in \cite{N03}, \cite{N04} and later in \cite{NSW10}. The theory of the two scale convergence from the periodic to the stochastic setting has been extended by Bourgeat, A. Mikeli{\'c} and Wright in \cite{BMW94}, using techniques from ergodic theory. There is a vast literature for partial differential equations with random coefficients, where this method was used, however most of the tackled problems in this setting are not in perforated domains, (see \cite{BMW94}, \cite{BLL06} and the references therein). Much less was done for homogenization of stochastic partial differential equations, in particular in perforated domains. We mention the paper \cite{WD07} where a reaction diffusion equation with a dynamical boundary condition with a noise source term on both the interior of the domain and on the boundary was studied, and through a tightness argument and a pointwise two scale convergence method the homogenized equation was derived. A comprehensive theory for solving stochastic homogenization problems has been constructed recently in \cite{JL13} and \cite{RSW12}, where a $\Sigma$-convergence method adapted to stochastic processes was developed. An application of the method to the homogenization of a stochastic Navier-Stokes type equation with oscillating coefficients in a bounded domain (without holes) has been provided.

For the deterministic Stokes or Navier Stokes equations in perforated domains we refer to: \cite{SP80}, \cite{T80}, \cite {A92}, \cite{Mik91}, \cite{CDE96}, \cite{A91}, \cite{Mik95}. In \cite{CDE96} the Stokes problem in a perforated domain with a nonhomogeneous Fourier boundary condition on the boundaries of the holes was studied while in \cite{A91} the same problem was studied with a slip boundary condition. As far as we know, the stochastic Stokes or Navier Stokes equation in a perforated domain has not been studied. 

In this paper we consider a stochastic linear Navier Stokes equation in a periodically perforated domain with a noise source. On the boundaries of the holes, we consider a dynamical Fourier boundary condition driven by a noise. A boundary condition is called dynamical if it involves the time derivative. 

We consider $D$, an open bounded Lipschitz  domain of $\R^n$ with boundary $\partial D$, and let $Y=[0, 1[ ^n$ the representative cell. Denote by $O$ an open subset of $Y$ with a smooth boundary $\partial O$, such that $\o{O}\subset Y$, and let $Y^*=Y\setminus \o{O}$. The elementary cell $Y$ and the small cavity or hole $O$ inside it are used to model small scale obstacles or heterogeneities in a physical medium $D$. Denote by $O^{\e,k}$ the translation of $\e O$ by $\e k$, $k\in\Z^{n}$. We make the assumption that the holes do not intersect the boundary $\partial D$ and we denote by $\mathcal{K^\e}$ the set of all $k\in\Z^n$ such that the cell $\e k +\e Y$ is strictly included in $D$. The set of all such holes will be denoted by $O^{\e}$, i.e.
$$O^{\e}:=\bigcup_{k\in\mathcal{K^\e}}O^{\e,k}=\bigcup_{k\in\mathcal{K^\e}}\e(k+O),$$
and set
$$D^\e:=D- \o{O^{\e}}.$$
By this construction, $D^\e$ is a periodically perforated domain with holes of size of the same order as the period.
One of the difficulties of the homogenization in perforated domains consists in the fact that the $\e$- problems are defined in different domains. In \cite{T80}, \cite{CDE96}, \cite{A92}, \cite{A91} suitable extensions for the velocity and for the pressure were defined to overcome this difficulty. 

The evolution Stokes equation in the domain $D^\e$ with a stochastic dynamical boundary condition on the boundaries of the holes is given by

\begin{equation}
\label{system}
\left\{
\begin{array}{rll}
d u^\e(t,x) &=\left [ \nu\Delta u^\e (t,x)- \nabla p^\e(t,x)+f(t,x) \right ]dt+g_1(t) d W_1(t) &\mbox{in}\  D^\e, \\
\mbox{div } u^\e(t,x) &=0&\mbox{in}\  D^\e, \\
\e^2 d u^\e(t,x) &=-\left[\nu \dfrac{\partial u^\e(t,x)}{\partial n} -p^\e(t,x) n + b \e u^\e(t,x)\right] dt+\e g_2^\e(t) d W_2(t)&\mbox{on}\  \partial O^\e, \\
u^\e(0,x)&=u_0^\e(x)&\mbox{in}\  D^\e, \\
u^\e(0,x)&=v_0^\e (x)&\mbox{on}\  \partial O^\e, \\
\end{array}
\right.
\end{equation}
where $u^\e$ is the velocity of the fluid and $p^\e$ is the pressure.

Using It\^o's formula and stochastic calculus we are able to prove some uniform estimates in some functional spaces that are $\e$- dependent. Our particular boundary condition makes it difficult to extend the velocity continuously in $H^1(D)^n$, so we chose to use the trivial extension by $0$ of the velocity as well as of the gradient of the velocity. This extension is a continuous one in $L^2 (D)$ for which the uniform estimates still hold. Our extension is not divergence free, so we cannot expect the homogenized solution to be divergence free. Hence we cannot use test functions that are divergence free in the variational formulation, which implies that the pressure has to be included. Because of the low regularity in time of the stochastic process, we have to define a more regular in time pressure $P(t) = \int_0^t p(s) ds$ in Theorem \ref{Pe}. We apply to it the same extension and we will recover in the limit the information into the coefficients of the homogenized equation.

For the convergence of the boundary integrals we use an idea from \cite{CDE96} to rewrite an integral on the boundary in terms of an integral on the whole domain. In this way, the integrals over the boundary $\partial O^\e$ become in the limit integrals over the whole domain $D$. In particular, in the case of the stochastic integral on the boundaries of the holes, we combined the idea with the use of the decomposition of the Wiener process in terms of the basis, and that it is of trace class.

Our process depends on three variables: $\omega,\ t,\ \mbox{and } x$ but the oscillations appear only in $x$, hence we will use the two scale convergence method in the space variable but with a parameter $(\omega, t) \in \Omega \times [0,T]$. One of the condition to be able to use two scale convergence is the uniform boundedness with respect to $\e > 0$ of the $L^2$- norm. We were not able to show uniform bounds for $\omega \in \Omega$ or for any $t\in [0,T]$, but only in $L^2(\Omega\times [0,T]\times D)^n$, thus we decided using  this type of convergence. Most of the results concerning the two scale convergence are being extended in a straighforward way to the results of our paper. We gather the results we use in Section \ref{section4}. The convergence in two scale is a stronger type of convergence than the weak convergence (see Corollary \ref{2sc-w}), and so the convergence to the homogenized solution will be the weak one in $L^2(\Omega\times [0,T]\times D)^n$. This convergence although weak in the deterministic sense,  is strong in the probability sense.

The paper will be organized as follows: in Section \ref{section2} we formulate more precisely our problem, set the functional setting and give the assumptions needed for the  rest of the paper. In Section \ref{section3} we study the problem \eqref{system} in the perforated domain and show the existence and uniqueness in Theorem \ref{mild}. The estimates, needed for the two scale convergence method,  are derived. We also introduce the pressure in Theorem \ref{mild} and setup the variational formulation to be used for the passage to the limit. In Section \ref{section4}, we introduce the two scale convergence and give a number of results that we will use. In Section \ref{section5}, we pass to the limit in the variational formulation \eqref{varforPt}. We derive the cell problem \eqref{cellsystem} and the equation for the homogenized solution $u^*$ in \eqref{eqhom'}. 
\section{Preliminaries and assumptions}
\label{section2}

In this section we introduce some functional spaces and assumptions in order to study the problem 
\eqref{system}.

\subsection{Functional setting }

Let us introduce the following Hilbert spaces

\begin{equation}
\label{spaceL2}
\mf{L}^2_\e:=L^2(D^\e)^n\times L^2(\partial O^\e)^n,
\end{equation}

\begin{equation}
\label{spaceHe}
\mf{H}^1_\e:=H^1(D^\e)^n\times H^{\frac{1}{2}}(\partial O^\e)^n.
\end{equation}

equipped respectively with the inner products

\[
  \langle \mf{U},\mf{V}\rangle=\int_{D^\e} \big[  u(x) \cdot v(x)\big] dx 
  +\int_{\partial O^{\e}} \big[  \o{u}(x') \cdot \o{v}(x')\big] d\sigma(x'),
\]
and 
\[
 ((\mf{U},\mf{V})) = \int_{D^\e} \big[ \nabla u(x) \cdot\nabla v(x)\big] dx.\]

We introduce the bounded linear and surjective operator
$\gamma^\e:H^1(D^\e)^n\mapsto H^{\frac{1}{2}}(\partial O^\e)^n$ such that
$\gamma^{\e}u=u|_{\partial O^\e}$  for all $u\in C^\infty\left(\overline{D^\e}\right)^n$. $\gamma^\e$ is the  trace operator, (see \cite{sohr}, pp47). We denote by $H^{-\frac{1}{2}}(\partial O^\e)^n$ the dual space of 
$H^{\frac{1}{2}}(\partial O^\e)^n$.

We denote by $\mf{H}^\e$ the closure of $\mathcal{V}^\e$ in $\mf{L}^2_\e$, and by $\mf{V}^\e$ the closure of $\mathcal{V}^\e$ in $\mf{H}^1_\e$,  
where 

\begin{equation}
\label{spacemcV}
\mathcal{V}^\e:=\left\{ \mf{U}=\left(\begin{array}{c} 
u\\
\o{u}
\end{array}\right)
\in C^\infty\left(\overline{D^\e}\right)^n\times \gamma^\e(C^\infty\left(\overline{D^\e}\right)^n)\ \ |\ \ \mbox{div }u=0,\ \o{u}=\e u_{\partial O^\e},\ u=0 \mbox{ on } \partial D\right\},
\end{equation}

Let us also denote by $H^\e$ and $V^\e$ the following functional spaces

\begin{equation*}
H^\e:=\left\{u \in L^2(D^\e)^n\ |\ \mbox{div }u=0\mbox{ in } D^\e,\ u\cdot n=0 \mbox{ on } \partial D,\ u\cdot n=\e \o{u}\cdot n\mbox{ on } \partial O^\e\mbox{ for }\o{u}\in L^2(\partial O^\e)^n\right\},
\end{equation*}
\begin{equation*}
V^\e:={\left\{u \in H^1(D^\e)^n\ |\ \mbox{div }u=0\mbox{ in } D^\e,\ \ u=0 \mbox{ on } \partial D\right\}}.
\end{equation*}
Let $\Pi^\e: \mf{L}^2_\e \mapsto L^2(D^\e)^n$ be the operator that represents the projection onto the first component, i.e. $\Pi^\e \mf{U} =u$, for every $\mf{U}=(u,\o{u})\in \mathbf{L}^2_\e$,
and let $B^\e: V^\e \mapsto H^{-\frac{1}{2}}(\partial O^\e)^n$ denote the linear operator defined by $B^\e u=\dfrac{\partial u}{\partial n}$.

$\mf{H}^\e$ and $\mf{V}^\e$ are separable Hilbert spaces with the inner products and norms inherited from $\mf{L}^2_\e$ and $\mf{H}^1_\e$ respectively:
\[
\|\mf{U}\|^2_{\mf{H}^\e}=\langle \mf{U},\mf{U}\rangle\, , 
\]
\[
 \|\mf{U}\|_{\mf{V}^\e}^2 = ((\mf{U},\mf{U})) \, , 
\]
and $H^\e$ and $V^\e$ are also separable Hilbert spaces with the norms induced by the projection $\Pi^\e$.

Denoting by $(\mf{H}^\e )'$ and $(\mf{V}^\e )'$ the dual spaces, if we identify  $\mf{H}^\e$ with  $(\mf{H}^\e) '$
then we have the Gelfand triple $\mf{V}^\e\subset \mf{H}^\e \subset (\mf{V}^\e)'$ with continuous injections. 

We denote the dual pairing between $\mf{U}\in \mf{V}^\e$ and $\mf{V}\in (\mf{V}^\e)'$ by
$\langle  \mf{U},\mf{V}\rangle_{\langle\mf{V}^\e,(\mf{V}^\e)'\rangle}$.
When $\mf{U}, \mf{V}\in \mf{H}^\e$, we have $\langle \mf{U},\mf{V}\rangle_{\langle\mf{V}^\e,(\mf{V}^\e)'\rangle}=\langle \mf{U},\mf{V}\rangle$.

Assume that $b$ is a strictely positive constant, and define the linear operator $\mf{A}^\e : D(\mf{A}^\e) \subset \mf{H}^\e\mapsto (\mf{H}^\e)'$:

\begin{equation}
\label{defAe}
\mf{A}^\e \mf{U}=\mf{A}^\e \left( \begin{array}{c}
u\\
\o{u}
\end{array}\right)
=
\left(\begin{array}{c}
-\nu \Delta u\\
\dfrac{\nu}{\e} B^\e u+\dfrac{b}{\e}\o{u}
\end{array}\right),
\end{equation}
with
$$D(\mf{A}^\e)=\{ \mf{U}\in \mf{V}^\e  | -\Delta u \in L^2(D^\e)^n\ \mbox{ and } \dfrac{\partial u}{\partial n} \in L^2(\partial O^\e)^n\},$$
and
$$\mf{A}^\e \mf{U} \cdot \mf{V}=\displaystyle\int_{D^\e} \nu \nabla u \nabla v dx+\int_{\partial O^\e} \e b \gamma^\e(u)\gamma^\e (v) d \sigma.$$

Let  $\mathbf{F}^\e \in L^2(0,T;L^2(D^\e)^n\times L^2(\partial O^\e)^n)$ be defined by
\begin{equation}
\label{defFe}
\mathbf{F}^\e(t,x)=\left(\begin{array}{c}
f(t,x)\\
0
\end{array}\right),
\end{equation}


Let $Q_1$ and $Q_2$ be linear positive operators in  
$L^2(D)^n$ of trace class. Let $(W_1(t))_{t\geq 0}$ and $(W_2(t))_{t\geq 0}$ be two mutually independent 
$L^2(D)^n$- valued Wiener processes defined on the complete probability space $(\Omega,\mathcal{F}, \mathbb{P})$ endowed with the canonical filtration $(\mathcal{F}_t)_{t\geq 0}$ and with the covariances $Q_1$ and $Q_2$. The expectation is denoted by $\mathbb{E}$. If $K$ and $H$ are two separable Hilbert spaces, then we will denote by $L_2(K,H)$ the space of bounded linear operators that are Hilbert-Schmidt from $K$ in $H$. If $Q$ is a linear positive operator in $K$ of trace class, then we will denote by $L_Q(K,H)$, the space of bounded linear operators that are Hilbert-Schmidt from $Q^{\frac{1}{2}} K$ to $H$, and the norm will be denoted by $\|\cdot\|_Q$.

Let us denote by

\begin{equation}
\label{defGe}
\mathbf{G}^\e(t)=\left(\begin{array}{cc}
g_1(t) & 0\\
0 & \e g_{2}^\e(t)
\end{array}\right), \qquad  \mathbf{W}(t)=(W_{1}(t), W_{2}(t)).
\end{equation}

The system \eqref{system} can be rewritten:

\begin{equation}
\label{system1}
\left\{
\begin{array}{rll}
d \mf{U}^\e(t) &+\mathbf{A}^\e \mf{U}^\e (t) dt=\mathbf{F}(t) dt+\mathbf{G}^\e(t) d \mathbf{W}, \\

\mf{U}^\e(0)&=\mf{U}_0^\e=\left( \begin{array}{c}
u_0^\e\\
\overline{u_{0}^{\e}}=\e v_0^\e
\end{array}
\right).
\end{array}
\right.
\end{equation}

\begin{lemma} (Properties of the operator $\mathbf{A}^\e$) For every $\e > 0$, the linear operator $\mf{A}^\e$ is positive and self-adjoint in $\mf{H}^\e$.
\end{lemma}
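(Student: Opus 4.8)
The plan is to identify $\mf{A}^\e$ with the self-adjoint operator canonically attached to the symmetric bilinear form
\[
a^\e(\mf{U},\mf{V}) = \nu\int_{D^\e}\nabla u:\nabla v\,dx + \e b\int_{\partial O^\e}\gamma^\e(u)\,\gamma^\e(v)\,d\sigma ,\qquad \mf{U},\mf{V}\in\mf{V}^\e .
\]
The first step is to establish the Green-type identity $\langle\mf{A}^\e\mf{U},\mf{V}\rangle = a^\e(\mf{U},\mf{V})$ for $\mf{U}\in D(\mf{A}^\e)$ and $\mf{V}=(v,\o v)\in\mathcal{V}^\e$, and then for all $\mf{V}\in\mf{V}^\e$ by density. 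Indeed, integrating $\int_{D^\e}(-\nu\Delta u)\cdot v\,dx$ by parts and using $v=0$ on $\partial D$ together with $\mathrm{div}\,v=0$ produces $\nu\int_{D^\e}\nabla u:\nabla v\,dx$ minus a boundary term $\nu\int_{\partial O^\e}\frac{\partial u}{\partial n}\cdot v\,d\sigma$; on the other hand, pairing the second component $\frac{\nu}{\e}B^\e u+\frac{b}{\e}\o u$ of $\mf{A}^\e\mf{U}$ with $\o v=\e\,\gamma^\e(v)$ produces $\nu\int_{\partial O^\e}\frac{\partial u}{\partial n}\cdot v\,d\sigma + \e b\int_{\partial O^\e}\gamma^\e(u)\gamma^\e(v)\,d\sigma$. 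The normal-derivative contributions cancel (the same cancellation that turns \eqref{system} into \eqref{system1}), leaving $a^\e(\mf{U},\mf{V})$. In particular $\langle\mf{A}^\e\mf{U},\mf{V}\rangle$ is symmetric in $(\mf{U},\mf{V})$, so $\mf{A}^\e$ is symmetric on $\mf{H}^\e$.

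Positivity is then read off directly: taking $\mf{V}=\mf{U}$ gives $\langle\mf{A}^\e\mf{U},\mf{U}\rangle = \nu\|\nabla u\|_{L^2(D^\e)}^2 + \e b\,\|\gamma^\e(u)\|_{L^2(\partial O^\e)}^2\ge 0$ since $\nu>0$ and $b>0$; invoking the Poincaré inequality on $D^\e$ (available because $u=0$ on $\partial D$ and the holes lie strictly inside $D$) one even obtains $\langle\mf{A}^\e\mf{U},\mf{U}\rangle\ge c(\e)\|\mf{U}\|_{\mf{H}^\e}^2$, and together with the trace inequality for $\gamma^\e$ this shows that $a^\e$ is bounded and coercive on $\mf{V}^\e$. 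For self-adjointness I would then use the standard criterion: a densely defined, symmetric, positive operator is self-adjoint as soon as $\mathrm{Range}(\mf{A}^\e)=\mf{H}^\e$ (equivalently $\mathrm{Range}(\mf{A}^\e+I)=\mf{H}^\e$). Given $\mf{F}\in\mf{H}^\e\subset(\mf{V}^\e)'$, the Lax--Milgram theorem applied to the coercive continuous form $a^\e$ yields a unique $\mf{U}\in\mf{V}^\e$ with $a^\e(\mf{U},\mf{V})=\langle\mf{F},\mf{V}\rangle$ for all $\mf{V}\in\mf{V}^\e$, and by the Green identity of the first step this forces $\mf{A}^\e\mf{U}=\mf{F}$ once we know $\mf{U}\in D(\mf{A}^\e)$.

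The remaining point — showing that the weak solution $\mf{U}$ actually lies in the strong domain $D(\mf{A}^\e)$, i.e.\ $\Delta u\in L^2(D^\e)^n$ and $\partial u/\partial n\in L^2(\partial O^\e)^n$ — is the one I expect to be the main obstacle, for two reasons: testing only against divergence-free fields forces an auxiliary pressure $p$ to appear via de Rham, so one only gets $-\nu\Delta u+\nabla p=\Pi^\e\mf{F}$ a priori, and $D$ is merely Lipschitz so no global $H^2$ regularity is available. This is harmless here because the holes have smooth boundary $\partial O^\e$ strictly separated from $\partial D$: interior regularity together with regularity up to $\partial O^\e$ gives the two missing memberships, while near $\partial D$ only $L^2$-solvability is used. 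Alternatively, and more cleanly, one may simply \emph{define} $\mf{A}^\e$ as the operator associated with the closed symmetric form $a^\e$ (closedness being immediate since $a^\e(\mf{U},\mf{U})+\|\mf{U}\|_{\mf{H}^\e}^2$ is an equivalent norm on $\mf{V}^\e$), in which case self-adjointness and positivity are automatic from the theory of closed symmetric semibounded forms and the Green identity of the first step is all that is needed to match the expression \eqref{defAe}.
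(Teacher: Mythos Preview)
Your proposal is correct, and your \emph{alternative} route---via the self-adjoint operator canonically attached to the closed symmetric form $a^\e$---is essentially what the paper does. The paper establishes the same Green identity $\langle\mf{A}^\e\mf{U},\mf{V}\rangle=a^\e(\mf{U},\mf{V})$ (hence symmetry) and the same coercivity estimate $\langle\mf{A}^\e\mf{U},\mf{U}\rangle\ge\nu\|\mf{U}\|_{\mf{V}^\e}^2$; for self-adjointness it then verifies directly that the stated domain $D(\mf{A}^\e)$ coincides with the form-theoretic domain $\{\mf{U}\in\mf{V}^\e:\ \mf{V}\mapsto a^\e(\mf{U},\mf{V})\text{ extends boundedly to }\mf{H}^\e\}$, after which it invokes Proposition~A.10 of Da~Prato--Zabczyk. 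This is exactly the closed-form machinery you sketch at the end, phrased as a domain identification rather than as an abstract definition.

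Your primary route through the range criterion $\mathrm{Range}(\mf{A}^\e)=\mf{H}^\e$ via Lax--Milgram is also valid but, as you correctly anticipate, is heavier: it requires showing that the weak solution lies in the strong domain, which brings in a pressure via de~Rham and regularity up to $\partial O^\e$. The paper sidesteps this by staying at the form level. It is worth noting, though, that the divergence-free constraint issue you flag---testing only against $v\in V^\e$ does not by itself yield $\Delta u\in L^2(D^\e)^n$ without accounting for a pressure---is passed over rather quickly in the paper's own verification of the domain equality, so your caution on this point is well placed whichever route one takes.
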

\begin{proof}
The operator is obviously symmetric, since for every $\mf{U},\ \mf{V} \in \mf{H}^\e$:
$$\langle \mf{A}^\e \mf{U}, \mf{V}\rangle =\nu \displaystyle\int_{D^\e} \nabla u \nabla v dx+\int_{\partial O^\e} \e b \gamma^\e(u)\gamma^\e (v) d \sigma$$
and also coercive:
$$\langle \mf{A}^\e \mf{U}, \mf{U}\rangle = \nu \displaystyle\int_{D^\e} \nabla u \nabla u dx+\int_{\partial O^\e} \e b \gamma^\e(u)\gamma^\e (u) d \sigma \geq\nu ||\mf{U}||^2_{\mf{V}^\e}.$$

To show that it is self-adjoint it will be enough to show that 
$$D(\mf{A}^\e) = \{\mf{U} \in \mf{V}^\e\ |\  \langle \mf{A}^\e \mf{U} ,\mf{V} \rangle \leq C ||\mf{V}||_{\mf{H}^\e} \mbox{ for every } \mf{V} \in \mf{V}^\e\}.$$

But, $\langle \mf{A}^\e \mf{U}, \mf{V}\rangle \leq C ||\mf{V}||_{\mf{H}^\e}$ for every $\mf{V} \in \mf{V}^\e$ is equivalent to
\begin{equation} 
\begin{split}
\nu \displaystyle\int_{D^\e} \nabla u \nabla v dx \leq C||v||_{L^2(D^\e)^n} + C ||v||_{L^2(\partial O^\e)^n}\ \forall v\in V^\e \Longleftrightarrow\\
\nu \displaystyle\int_{D^\e} -\Delta u  v dx + \displaystyle \int_{\partial O^\e} \dfrac{\partial u}{\partial n} v d\sigma \leq C||v||_{L^2(D^\e)^n} + C ||v||_{L^2(\partial O^\e)^n}\ \forall v\in V^\e \Longleftrightarrow\\
\end{split}
\end{equation}
$\Delta u \in L^2(D^\e)^n$ and $\dfrac{\partial u}{\partial n} \in L^2(\partial O^\e)^n$, so $\mf{U} \in D(\mf{A}^\e)$.

Now we use Proposition A.10, page 389 from \cite{DPZ} to infer that $\mf{A}^\e$ is self-adjoint.
\end{proof}

For $\alpha>0$, let us denote by  $(\mf{A}^\e)^{\alpha}$ the $\alpha$-power of the operator $\mf{A}^\e$
and $D((\mf{A}^\e)^{\alpha})$ its domain. In particular,
$$D((\mf{A}^\e)^{0})=\mf{H}^\e,\quad {\rm and} \quad D((\mf{A}^\e)^{1/2})=\mf{V}^\e.$$

Indeed, for every $\e > 0$ and using the trace inequality
$$\|\gamma_{\e}(u)\|^{2}_{L^2(\partial O^{\e})^n}\leq\|\gamma_{\e}(u)\|^{2}_{H^{\frac{1}{2}}(\partial O^{\e})^n}
\leq C(\e)\|u\|^{2}_{H^1(D^\e)^n},$$
it follows
\begin{align*}
\langle \mf{A}^\e \mf{U}, \mf{U}\rangle &=\nu \displaystyle\int_{D^\e} \nabla u \nabla u dx+\int_{\partial O^\e} \e b \gamma^\e(u)\gamma^\e (u) d \sigma \leq C \|\nabla u\|^{2}_{L^2(D^\e)^{n\times n}}.
\end{align*}

Denote  by $S_{\e}(t)$  the analytic semigroup generated by $\mf{A}^{\e}$ (see \cite{pazy}). 

\begin{lemma}
\label{traceineqe}
Let $\phi \in H^1(D^\e)$. Then
$$\sqrt{\e} \| \phi\|_{L^2(\partial O^\e)} \leq C \|\phi \|_{H^1(D^\e)}.$$
\end{lemma}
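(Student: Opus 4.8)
The plan is a standard cell-by-cell rescaling that reduces the inequality to the classical trace theorem on the \emph{fixed} reference geometry $Y^*=Y\setminus\o{O}$. First I would record the trace theorem on $Y^*$, which is a bounded Lipschitz (indeed smooth-holed) domain: there is a constant $C_0$ depending only on $Y^*$, hence independent of $\e$, such that
\[
\|\psi\|^2_{L^2(\partial O)}\le C_0\,\|\psi\|^2_{H^1(Y^*)}\qquad\text{for all }\psi\in H^1(Y^*).
\]

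Next, fix $\phi\in H^1(D^\e)$ and, for each $k\in\mathcal{K}^\e$, set $\psi_k(y):=\phi(\e(k+y))$ for $y\in Y^*$. Since $\e(k+Y^*)\subset D^\e$, we have $\psi_k\in H^1(Y^*)$, and the change of variables $x=\e(k+y)$ gives the scaling identities
\[
\int_{\partial O^{\e,k}}|\phi|^2\,d\sigma=\e^{n-1}\!\int_{\partial O}|\psi_k|^2\,d\sigma,\quad
\int_{\e(k+Y^*)}|\phi|^2\,dx=\e^{n}\!\int_{Y^*}|\psi_k|^2\,dy,\quad
\int_{\e(k+Y^*)}|\nabla\phi|^2\,dx=\e^{n-2}\!\int_{Y^*}|\nabla\psi_k|^2\,dy.
\]
Applying the reference trace inequality to $\psi_k$, substituting these identities, and multiplying through by $\e^{n}$, one obtains
\[
\e\int_{\partial O^{\e,k}}|\phi|^2\,d\sigma\le C_0\Big(\int_{\e(k+Y^*)}|\phi|^2\,dx+\e^2\!\int_{\e(k+Y^*)}|\nabla\phi|^2\,dx\Big).
\]

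Finally, since we are interested in $\e\to0$ we may assume $\e\le1$, so the factor $\e^2$ can be dropped; and because the cells $\{\e(k+Y^*)\}_{k\in\mathcal{K}^\e}$ are pairwise disjoint and contained in $D^\e$, summing over $k\in\mathcal{K}^\e$ yields $\e\,\|\phi\|^2_{L^2(\partial O^\e)}\le C_0\,\|\phi\|^2_{H^1(D^\e)}$, and taking square roots gives the claim with $C=\sqrt{C_0}$.

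\textbf{On the main difficulty.} There is no serious obstacle here; the only point deserving care is the $\e$-uniformity of the constant, which is precisely what the rescaling delivers, since the trace constant is invoked once on the fixed domain $Y^*$. One should also note that the part of $D^\e$ near $\partial D$ not covered by full cells carries no piece of $\partial O^\e$ and only contributes nonnegative terms to $\|\phi\|^2_{H^1(D^\e)}$, so restricting the sum to $\mathcal{K}^\e$ and discarding that part is harmless.
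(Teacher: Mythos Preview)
Your proof is correct and follows essentially the same approach as the paper: rescale each cell to the fixed reference domain $Y^*$, apply the trace inequality there with a constant independent of $\e$, and sum over $k\in\mathcal{K}^\e$. Your version is simply more detailed, making the scaling identities and the handling of the $\e^2$ factor in front of the gradient term explicit.
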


\begin{proof}
For each $k\in \mathcal{K}^\e$, let $\phi^{\e, k}$ be the function defined in $Y^*$ by $\phi^{\e, k}(x) = \phi\left(  \frac{x}{\e} -k \right)$. Then,  we have
$$\| \phi^{\e, k} \|_{L^2(\partial O)} \leq C \left( \phi^{\e, k} \|_{L^2(Y^*)} + \| \nabla \phi^{\e, k} \|_{L^2(Y^*)^n} \right).$$
We use the change of variables $x=\e x +\e k$ and add over all $k\in\mathcal{K}^\e$ to obtain the result.
\end{proof}

In the next section, we will set up the assumptions.
\subsection{Assumptions} We assume that the operator 

$$g_1\in C([0,T]; L_{Q_{1}}(L^2(D)^n, L^2(D)^n)$$
and let

$$g_{21}\in C([0,T]; L_{Q_{2}}(L^2(D)^n,H^1(D)^n)$$ 
and
$$g_{22}\in C([0,T]; L_{Q_{2}}(L^2(D)^n,L^2(\partial O)^n)$$ 
such that there exists a positive constant $C_T$ 

\begin{equation}
\label{eqCT}
\begin{split}
||g_1(t)||^2_{Q_1} := 
\sum_{j=1}^\infty \lambda_{j1}||g_1(t) e_{j1}||^2_{L^2(D)^n} \leq C_T, \ t\in [0,T],\\
||g_{21}(t)||^2_{Q_2} := 
\sum_{j=1}^\infty \lambda_{j2}||g_{21}(t) e_{j2}||^2_{H^1(D)^n} \leq C_T, \ t\in [0,T],\\
||g_{22}(t)||^2_{Q_2} := 
\sum_{j=1}^\infty \lambda_{j2} ||g_{22}(t) e_{j2}||^2_{L^2(\partial O)^n} \leq C_T, \ t\in [0,T],\\
\end{split}
\end{equation}
where $\{e_{j1}\}_{j=1}^\infty$ and $\{e_{j2}\}_{j=1}^\infty$ are respectively the eigenvalues for $Q_1$ and $Q_2$, and $\{\lambda_{j1}\}_{j=1}^\infty$ and $\{\lambda_{j2}\}_{j=1}^\infty$ are the corresponding sequences of eigenvalues. For any element $h \in L^2(\partial O)$, we define the element $\mathcal{R}^\e h \in L^2(\partial O^\e)$ by 
\begin{equation}
\label{defRe}
\mathcal{R}^\e h(x) = h\left(\dfrac{x}{\e}\right)
\end{equation}
where $h$ is considered $Y-$ periodic.
\begin{lemma}
\label{Reh}
There exists a constant $C$ independent of $\e >0$ and $h \in L^2(\partial O)^n)$ such that
$$\sqrt{\e}\|\mathcal{R}^\e h\|_{L^2(\partial O^\e)^n} \leq C \|h\|_{L^2(\partial O)^n}.$$
\end{lemma}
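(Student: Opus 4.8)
The plan is to reduce the estimate to the single reference hole $O$ by the change of variables $x\mapsto\e^{-1}x$ on each translated hole, using the $Y$-periodicity of $h$ together with the scaling of the $(n-1)$-dimensional surface measure, and then to count the holes. This is the same device already used in the proof of Lemma~\ref{traceineqe}.

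Fix $k\in\mathcal{K}^\e$. The boundary of the translated hole is $\partial O^{\e,k}=\e(k+\partial O)$, so writing $x=\e(k+y)$ with $y\in\partial O$ we have $d\sigma(x)=\e^{n-1}\,d\sigma(y)$, while by periodicity $\mathcal{R}^\e h(x)=h(x/\e)=h(k+y)=h(y)$. Hence
\[
\int_{\partial O^{\e,k}}|\mathcal{R}^\e h(x)|^2\,d\sigma(x)=\e^{n-1}\int_{\partial O}|h(y)|^2\,d\sigma(y)=\e^{n-1}\,\|h\|_{L^2(\partial O)^n}^2 .
\]

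Next I would sum over $k\in\mathcal{K}^\e$. Since the cells $\e(k+Y)$, $k\in\mathcal{K}^\e$, are pairwise disjoint and contained in $D$, each of volume $\e^n$, their number satisfies $\#\mathcal{K}^\e\le|D|\,\e^{-n}$. Summing the identity above over $k$ gives
\[
\|\mathcal{R}^\e h\|_{L^2(\partial O^\e)^n}^2=\#\mathcal{K}^\e\,\e^{n-1}\,\|h\|_{L^2(\partial O)^n}^2\le|D|\,\e^{-1}\,\|h\|_{L^2(\partial O)^n}^2 ,
\]
and multiplying through by $\e$ and taking square roots yields the claim with $C=|D|^{1/2}$ (the vector-valued case being handled componentwise).

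There is no genuine obstacle; the only points demanding attention are the exponent $n-1$ (rather than $n$) in the Jacobian of the surface element and the bound $\#\mathcal{K}^\e\lesssim\e^{-n}$, which is where the boundedness of $D$ enters. Together these produce the growth factor $\e^{n-1}\cdot\e^{-n}=\e^{-1}$, and this is precisely what the weight $\sqrt{\e}$ in the statement is designed to compensate.
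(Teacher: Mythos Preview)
Your proof is correct and follows essentially the same approach as the paper: a change of variables on each translated hole produces the factor $\e^{n-1}$, and summing over the at most $C\e^{-n}$ holes yields the $\e^{-1}$ growth. Your version is in fact slightly more explicit, identifying the constant as $|D|^{1/2}$, whereas the paper leaves it as a generic $C$.
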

\begin{proof}
We use the definition of $\mathcal{R}^\e h$ to obtain after a change of variables:
\begin{equation*}
\begin{split}
\|\mathcal{R}^\e h\|_{L^2(\partial O^\e)^n}^2 &= \int_{\partial O^\e} h^2\left( \frac{x'}{\e} \right) d\sigma(x') =  \e^{n-1}\sum_{k \in \mathcal{K}^\e} \|h\|_{L^2(\partial O)^n}^2\\
 &\leq \e^{n-1} C \e^{-n}  \|h\|_{L^2(\partial O)^n}^2 = C \e^{-1}\|h\|_{L^2(\partial O^\e)^n}^2,
\end{split}
\end{equation*}
which gives the result.
\end{proof}

We define $g_2^\e(t)$ by
$$g_2^\e(t)=g_{21}(t)+\mathcal{R}^\e g_{22}(t),$$
where $\mathcal{R}^\e g_{22} \in C([0,T]; L_{Q_{2}}(L^2(D)^n,L^2(\partial O ^\e )^n)$.
Moreover, throughout the paper we will assume that 
$\mf{U}_{0}^{\e}= (u^{\e}_{0},\overline{u^{\e}_{0}}=\e v_0^\e)$ is an $\mathcal{F}_{0}-$ measurable $\mf{H}^\e-$ valued random variable and there exists a constant $C$ independent of $\e$, such that for every $\e > 0$:

\begin{equation}\label{u0}
\mathbb{E}\|u^{\e}_{0}\|_{L^2(D^\e)^n}^2+\e\mathbb{E}\|v_0^\e\|_{L^{2}(\partial O^\e)}^2\leq C.
\end{equation}

\section{The microscopic model}
\label{section3}
In this section, we will state the well posedness of system \eqref{system} and some uniform estimates wrt $\e$.

\begin{theorem}\label{mild}
 (Well posedness of the microscopic model)
Assume that \eqref{eqCT} holds, then for any $T>0$ and any $\mf{U}^\e_{0}$ an $\mf{H}^\e-$ valued measurable random variable, the system \eqref{system} has a unique mild solution 
$\mf{U}^\e\in L^{2}(\Omega, C([0,T], \mf{H}^\e)\cap L^{2}(0,T; \mf{V}^\e))$, 

\begin{equation}\label{mildeq} 
\mf{U}^\e(t)=S_{\e}(t)\mf{U}^\e_{0}+\int_{0}^{t}S_{\e}(t-s)\mathbf{F}(s)ds
+\int_{0}^{t}S_{\e}(t-s)\mathbf{G}^\e(s)d\mathbf{W}, \quad t\in [0,T].
\end{equation}

The mild solution $\mf{U}^\e$ is also a weak solution, that is,  $\mathbb{P}$-a.s.

\begin{equation}\label{var}
\langle \mf{U}^\e(t), \bm{\phi}\rangle+\int_{0}^{t}\langle (\mf{A}^\e)^{1/2} \mf{U}^\e(s),(\mf{A}^\e)^{1/2}\bm{\phi}\rangle ds
=\langle \mf{U}^\e_{0},\bm{\phi}
\rangle +\int_{0}^{t}\langle \mf{F}(s),\bm{\phi}\rangle ds+\int_{0}^{t}\langle \mf{G}^\e(s)d\mf{W}(s),\bm{\phi}\rangle
\end{equation}
for $t\in [0,T]$ and $\bm{\phi}\in \mf{V}^\e$. 

Moreover, if \eqref{u0} holds then for every $\e>0$

\begin{equation}\label{U-average1}
\mathbb{E}\|\mf{U}^\e(t)\|_{\mf{H}^\e}^{2}
+\mathbb{E}\int_{0}^{t}\|\mf{U}^\e(s)\|_{\mf{V}^\e}^{2}ds
\leq C_{T}(1+\mathbb{E}\|\mf{U}^\e_{0}\|_{\mf{H}^\e}^{2}), \quad t\in [0,T]
\end{equation}
and 

\begin{equation}\label{U-average2}
\mathbb{E}\sup_{t\in[0,T]}\|\mf{U}^\e(t)\|_{\mf{H}^\e}^{2}
\leq C_{T}(1+\mathbb{E}\|\mf{U}^\e_{0}\|_{\mf{H}^\e}^{2}).
\end{equation}

\end{theorem}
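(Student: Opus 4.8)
The plan is to recognize \eqref{system1} as a stochastic evolution equation with a positive self-adjoint generator $\mf{A}^\e$ and additive noise, and to invoke the standard semigroup/variational theory (e.g.\ \cite{DPZ}) on the fixed-$\e$ Gelfand triple $\mf{V}^\e\subset\mf{H}^\e\subset(\mf{V}^\e)'$. First I would check the hypotheses of the abstract existence theorem: $\mf{A}^\e$ generates an analytic semigroup $S_\e(t)$ (already recorded via \cite{pazy}); $\mf{F}^\e\in L^2(0,T;\mf{H}^\e)$ by \eqref{defFe}; and the diffusion coefficient $\mf{G}^\e(t)$ is a Hilbert–Schmidt operator from $Q^{1/2}(L^2(D)^n\times L^2(D)^n)$ into $\mf{H}^\e$, uniformly in $t$. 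This last point is where Lemma \ref{Reh} (and \eqref{eqCT}) enters: the lower block $\e g_2^\e(t)=\e g_{21}(t)+\e\mathcal{R}^\e g_{22}(t)$ has $\|\cdot\|_{L^2(\partial O^\e)^n}$-norm controlled, after multiplying by $\e$ and using $\sqrt{\e}\|\mathcal{R}^\e h\|_{L^2(\partial O^\e)^n}\le C\|h\|_{L^2(\partial O)^n}$ together with Lemma \ref{traceineqe} for the $g_{21}$ part, by $\sqrt{\e}\cdot C_T^{1/2}$, hence is bounded in the Hilbert–Schmidt norm into $\mf{H}^\e$ for each fixed $\e$. With these in place, the abstract theory gives a unique mild solution $\mf{U}^\e$ in $L^2(\Omega;C([0,T];\mf{H}^\e))$ of the form \eqref{mildeq}, and a stochastic-regularity/parabolic-smoothing argument (using analyticity of $S_\e$) upgrades it to $L^2(\Omega;L^2(0,T;\mf{V}^\e))$.

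Next I would show the mild solution is a weak (variational) solution \eqref{var}. The standard route is to test \eqref{mildeq} against $\bm\phi\in\mf{V}^\e$ (equivalently against $D(\mf{A}^\e)$ elements and pass to the limit), use the self-adjointness of $\mf{A}^\e$ to move $(\mf{A}^\e)^{1/2}$ onto the test function, and apply a stochastic Fubini theorem to the convolution stochastic integral $\int_0^t S_\e(t-s)\mf{G}^\e(s)\,d\mf{W}$. Alternatively one verifies directly that \eqref{mildeq} satisfies the Itô form $d\langle\mf{U}^\e,\bm\phi\rangle=-\langle(\mf{A}^\e)^{1/2}\mf{U}^\e,(\mf{A}^\e)^{1/2}\bm\phi\rangle\,dt+\langle\mf{F},\bm\phi\rangle\,dt+\langle\mf{G}^\e\,d\mf{W},\bm\phi\rangle$. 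Since $\mf{A}^\e$ is coercive, $\langle(\mf{A}^\e)^{1/2}\mf{U}^\e,(\mf{A}^\e)^{1/2}\mf{U}^\e\rangle\ge\nu\|\mf{U}^\e\|_{\mf{V}^\e}^2$, which is exactly what is needed for the energy estimates.

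For the a priori estimates \eqref{U-average1}–\eqref{U-average2} I would apply the Itô formula to $\|\mf{U}^\e(t)\|_{\mf{H}^\e}^2$ in the Gelfand triple. This yields
\[
\|\mf{U}^\e(t)\|_{\mf{H}^\e}^2+2\int_0^t\langle\mf{A}^\e\mf{U}^\e,\mf{U}^\e\rangle\,ds
=\|\mf{U}^\e_0\|_{\mf{H}^\e}^2+2\int_0^t\langle\mf{F}(s),\mf{U}^\e(s)\rangle\,ds
+2\int_0^t\langle\mf{U}^\e(s),\mf{G}^\e(s)\,d\mf{W}\rangle+\int_0^t\|\mf{G}^\e(s)\|_{\mf{H}^\e\text{-HS}}^2\,ds.
\]
Using coercivity on the left, Young's inequality on the $\mf{F}$-term, the trace-class bounds \eqref{eqCT} together with Lemmas \ref{traceineqe} and \ref{Reh} to bound the Itô-correction term $\int_0^t\|\mf{G}^\e\|^2\,ds$ by $C_T t$ (uniformly in $\e$), and taking expectations kills the martingale term, giving \eqref{U-average1} after Grönwall. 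For \eqref{U-average2} I would first take the supremum over $t$ inside the expectation and then estimate the resulting $\mathbb{E}\sup_t$ of the stochastic integral by the Burkholder–Davis–Gundy inequality, absorbing the $\mathbb{E}\sup_t\|\mf{U}^\e\|_{\mf{H}^\e}^2$ factor produced by BDG into the left-hand side. The one point requiring care — and the main obstacle — is making every constant genuinely $\e$-independent: the bare trace inequality $\|\gamma^\e u\|_{L^2(\partial O^\e)^n}\le C(\e)\|u\|_{H^1(D^\e)^n}$ blows up as $\e\to0$, so the $\e$-scaling in $\mf{A}^\e$ and in $\mf{G}^\e$ (the factors $1/\e$, $b/\e$, and $\e g_2^\e$) must be matched precisely against the scaled trace estimates $\sqrt{\e}\|\phi\|_{L^2(\partial O^\e)}\le C\|\phi\|_{H^1(D^\e)}$ (Lemma \ref{traceineqe}) and $\sqrt{\e}\|\mathcal{R}^\e h\|_{L^2(\partial O^\e)^n}\le C\|h\|_{L^2(\partial O)^n}$ (Lemma \ref{Reh}), using also \eqref{u0} for the initial data. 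Once the bookkeeping confirms the boundary contributions carry a compensating $\e$, all constants depend only on $T$, $\nu$, $b$, and the data, completing the proof.
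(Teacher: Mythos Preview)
Your proposal is correct and follows essentially the same approach as the paper: invoke the abstract theory from \cite{DPZ} (the paper cites Theorem~7.4 specifically) for existence and uniqueness of the mild solution, then apply It\^o's formula to $\|\mf{U}^\e(t)\|_{\mf{H}^\e}^2$, take expectations and use Gr\"onwall for \eqref{U-average1}, and the Burkholder--Davis--Gundy inequality plus absorption for \eqref{U-average2}. Your explicit bookkeeping of the $\e$-uniformity of constants via Lemmas~\ref{traceineqe} and~\ref{Reh} is more detailed than the paper's proof, which leaves this point implicit, but the underlying argument is the same.
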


\begin{proof}
Since the operator $\mf{A}^{\e}$ is the generator of a strongly continuous semigroup 
$S_{\e}(t),\ t\geq 0$ in $\mf{H}^\e$ and using the assumption \eqref{eqCT}, then the existence and uniqueness of mild solutions in  $\mf{H}^\e$ is a consequence of Theorem 7.4 of \cite{DPZ}.
The regularity in  $\mf{V}^\e$ is a consequence of the estimates below.

Applying the It\^{o} formula to $\|\mf{U}^\e(t)\|_{\mf{H}^\e}^{2}$, we get that
\begin{align*}
d\|\mf{U}^\e(t)\|_{\mf{H}^\e}^{2}&=2\langle \mf{U}^\e(t),d\mf{U}^\e(t)\rangle dt+\| \mf{G}^\e(t)\|_{Q}^{2}dt\\
&=-2\langle \mf{A}^\e \mf{U}^\e(t),\mf{U}^\e(t)\rangle dt
+2 \langle \mf{F}(s),\mf{U}^\e(t)\rangle dt+2\langle \mf{G}^\e(t)d\mf{W},\mf{U}^\e(t)\rangle dt
+\| \mf{G}^\e(t)\|_{Q}^{2}dt.
\end{align*}
Hence,

\begin{align}\label{1}
\|\mf{U}^\e(t)\|_{\mf{H}^\e}^{2}&+2\int_{0}^{t}\|(\mf{A}^\e)^{1/2} \mf{U}^\e(s)\|_{\mf{H}^\e}^{2}ds\leq 
\|\mf{U}^\e_{0}\|_{\mf{H}^\e}^{2}+\int_{0}^{t}\|\mf{F}(s)\|_{\mf{H}^\e}^{2}ds+\int_{0}^{t}\|\mf{U}^\e(s)\|_{\mf{H}^\e}^{2}ds
\nonumber\\
&+2\int_{0}^{t} \langle \mf{G}^\e(s)d\mf{W},\mf{U}^\e(t)\rangle +\int_{0}^{t}\| \mf{G}^\e(s)\|_{Q}^{2}ds.
\end{align}
Taking the expected value yields

\begin{equation}\label{2}
\mathbb{E}\|\mf{U}^\e(t)\|_{\mf{H}^\e}^{2}\leq 
\mathbb{E}\|\mf{U}^\e_{0}\|_{\mf{H}^\e}^{2}+\int_{0}^{t}\|\mf{F}(s)\|_{\mf{H}^\e}^{2}ds+
\int_{0}^{t}\mathbb{E}\|\mf{U}^\e(s)\|_{\mf{H}^\e}^{2}ds
+\int_{0}^{t}\| \mf{G}^\e(s)\|_{Q}^{2}ds
\end{equation}
and 
\begin{equation}\label{3}
\mathbb{E}\int_{0}^{t}\|\mf{U}^\e(s)\|_{\mf{V}^\e}^{2}ds
\leq \mathbb{E}\|\mf{U}^\e_{0}\|_{\mf{H}^\e}^{2}
+\int_{0}^{t}\|\mf{F}(s)\|_{\mf{H}^\e}^{2}ds+\int_{0}^{t}\mathbb{E}\|\mf{U}^\e(s)\|_{\mf{H}^\e}^{2}ds 
+\int_{0}^{t}\| \mf{G}^\e(s)\|_{Q}^{2}ds
\end{equation}

Now  \eqref{U-average1} follows from using Gronwall's lemma in \eqref{2}.

On the other side, \eqref{1} implies that
\begin{align*}
\sup_{0\leq t\leq T}\|\mf{U}^\e(t)\|_{\mf{H}^\e}^{2}&+
2\int_{0}^{T}\|\mf{U}^\e(s)\|_{\mf{V}^\e}^{2}ds\leq 
\|\mf{U}^\e_{0}\|_{\mf{H}^\e}^{2}+\int_{0}^{T}\|\mf{F}(s)\|_{\mf{H}^\e}^{2}ds+\int_{0}^{T}\|\mf{U}^\e(s)\|_{\mf{H}^\e}^{2}ds\\
&+2\sup_{0\leq t\leq T}\left |\int_{0}^{t} \langle \mf{G}^\e(s)d\mf{W},\mf{U}^\e(t)\rangle \right |
+\int_{0}^{T}\| \mf{G}^\e(s)\|_{Q}^{2}ds
\end{align*}
Moreover using the Burkholder-David-Gundy inequality and the Young inequality, we get that

\begin{align*}
\mathbb{E}\sup_{0\leq t\leq T}\left |\int_{0}^{t} \langle \mf{G}^\e(s)d\mf{W},\mf{U}^\e(t)\rangle \right | &\leq 
\mathbb{E}\left(\int_{0}^{T}\left |\mf{G}^\e(s) \mf{U}^\e(s)\right |^{2}ds\right)^{1/2}\\
&\leq \frac{1}{2} \mathbb{E}\sup_{0\leq t\leq T}\|\mf{U}^\e(t)\|_{\mf{H}^\e}^{2}
+ C\int_{0}^{T}|\mf{G}^\e(s)|^{2}ds\\
&\leq \frac{1}{2} \mathbb{E}\sup_{0\leq t\leq T}\|\mf{U}^\e(t)\|_{\mf{H}^\e}^{2}+C_{T}
\end{align*}

Now, plugging this estimate in the previous one and using Gronwall's lemma completes the proof of 
\eqref{U-average2}.
\end{proof}

\begin{theorem}\label{uniform}
Assume that the assumptions of Theorem \ref{mild} hold. Then, for every $t\in [0,T]$ and every $\e>0$

\begin{align}\label{uniform1}
\mathbb{E}\|u^{\e}(t)\|_{H^\e}^{2} &
+\e^{2}\mathbb{E}\|\gamma_{\e}u^{\e}(t)\|_{L^{2}(\partial O^{\e})^n}^{2} 
+\mathbb{E}\int_{0}^{t}\left( \|u^{\e}(s)\|_{V^\e}^{2}
+\e^{2}\|\gamma_{\e} u^{\e}(s)\|_{H^{\frac{1}{2}}(\partial O^{\e})^n}^{2}\right)ds\nonumber\\
&\leq C_{T}(1+\mathbb{E}\|\mf{U}^\e_{0}\|_{\mf{H}^\e}^{2}), 
\end{align}
and

\begin{equation}\label{uniform2}
\mathbb{E}\sup_{t\in[0,T]}\left(\|u^{\e}(t)\|_{H^\e}^{2}
+ \e^{2}\|\gamma_{\e}u^{\e}(t)\|_{L^{2}(\partial O^{\e})^n}^{2}\right)
 \leq C_{T}(1+\mathbb{E}\|\mf{U}^\e_{0}\|_{\mf{H}^\e}^{2}).
\end{equation}
Moreover, for $q\geq 2$ 

\begin{align}\label{uq}
\sup_{t\in[0,T]}\left(\mathbb{E}\|u^{\e}(t)\|_{H^\e}^{q} 
+\e^{2}\mathbb{E}\|\gamma_{\e}u^{\e}(t)\|_{L^{2}(\partial O^{\e})^n}^{q} \right)
\leq C(q, T)(1+\mathbb{E}\|\mf{U}^\e_{0}\|_{\mf{H}^\e}^{q}), 
\end{align}

\end{theorem}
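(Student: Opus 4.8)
The plan is to derive everything from Theorem \ref{mild} by exploiting the block structure of $\mf{H}^\e$ and $\mf{V}^\e$. First I would observe that, because $\mf{U}^\e = (u^\e, \o{u^\e})$ with $\o{u^\e} = \e \gamma^\e u^\e$ for the (closure of the) admissible space, the $\mf{H}^\e$-norm splits as
\[
\|\mf{U}^\e(t)\|_{\mf{H}^\e}^2 = \|u^\e(t)\|_{L^2(D^\e)^n}^2 + \e^2 \|\gamma^\e u^\e(t)\|_{L^2(\partial O^\e)^n}^2,
\]
and similarly $\|\mf{U}^\e(t)\|_{\mf{V}^\e}^2 = \|\nabla u^\e(t)\|_{L^2(D^\e)^{n\times n}}^2$. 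Since the $H^\e$- and $V^\e$-norms are the ones induced through the projection $\Pi^\e$, i.e. $\|u^\e\|_{H^\e} = \|u^\e\|_{L^2(D^\e)^n}$ and $\|u^\e\|_{V^\e}^2 \simeq \|\nabla u^\e\|_{L^2(D^\e)^{n\times n}}^2$ (using the Poincar\'e inequality on $D^\e$ with the vanishing trace on $\partial D$, with constant independent of $\e$), the left-hand sides of \eqref{uniform1} and \eqref{uniform2} are, up to harmless constants, exactly $\mathbb{E}\|\mf{U}^\e(t)\|_{\mf{H}^\e}^2 + \mathbb{E}\int_0^t \|\mf{U}^\e(s)\|_{\mf{V}^\e}^2\,ds$ and $\mathbb{E}\sup_{[0,T]}\|\mf{U}^\e(t)\|_{\mf{H}^\e}^2$ respectively. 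Hence \eqref{uniform1} follows from \eqref{U-average1} and \eqref{uniform2} follows from \eqref{U-average2}; the only subtlety is that the control of $\e^2\|\gamma^\e u^\e\|_{H^{1/2}(\partial O^\e)^n}^2$ in \eqref{uniform1} requires an $\e$-uniform bound relating the fractional boundary norm to the bulk $H^1$-norm, which one gets by the standard trace estimate on the reference cell $Y^*$ rescaled as in Lemma \ref{traceineqe} (there it is done for the $L^2$-trace; the same rescaling argument with the $H^{1/2}(\partial O)$-norm on $Y^*$ gives $\e \|\gamma^\e u^\e\|_{H^{1/2}(\partial O^\e)^n}^2 \le C\|u^\e\|_{H^1(D^\e)^n}^2$, so the $\e^2$-weighted term is dominated by $\e\|u^\e\|_{V^\e}^2 \le C\|u^\e\|_{V^\e}^2$ after absorbing one factor of $\e\le 1$).

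For the higher moment bound \eqref{uq} I would not re-run the energy estimate but instead return to the It\^o formula applied to $\|\mf{U}^\e(t)\|_{\mf{H}^\e}^q = \big(\|\mf{U}^\e(t)\|_{\mf{H}^\e}^2\big)^{q/2}$. Writing $X(t) = \|\mf{U}^\e(t)\|_{\mf{H}^\e}^2$, the It\^o formula gives
\[
d\big(X(t)^{q/2}\big) = \tfrac{q}{2} X(t)^{q/2-1}\,dX(t) + \tfrac{q}{2}\big(\tfrac{q}{2}-1\big) X(t)^{q/2-2}\,d\langle X\rangle(t),
\]
where, from the computation already in the proof of Theorem \ref{mild}, $dX(t) = \big(-2\langle \mf{A}^\e\mf{U}^\e,\mf{U}^\e\rangle + 2\langle\mf{F},\mf{U}^\e\rangle + \|\mf{G}^\e\|_Q^2\big)dt + 2\langle \mf{G}^\e d\mf{W},\mf{U}^\e\rangle$ and $d\langle X\rangle(t) \le 4\|\mf{G}^\e(t)\|_Q^2\,\|\mf{U}^\e(t)\|_{\mf{H}^\e}^2\,dt$. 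The coercivity term $-q X^{q/2-1}\langle\mf{A}^\e\mf{U}^\e,\mf{U}^\e\rangle \le 0$ is dropped, the drift and It\^o-correction terms are bounded using Young's inequality together with $\|\mf{F}\|_{L^2(0,T;\mf{H}^\e)}$ and the bound \eqref{eqCT} on $\|\mf{G}^\e\|_Q$ (which is $\e$-uniform, since $g_2^\e = g_{21} + \mathcal{R}^\e g_{22}$ and the $\e$-prefactor in $\mf{G}^\e$ cancels the $\e^{-1/2}$ loss from Lemma \ref{Reh}, so that $\e^2\|g_2^\e(t)\|_{Q_2}^2 \le C_T$), and the stochastic term has zero expectation after a localization/stopping-time argument. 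Taking expectations and applying Gronwall to $t\mapsto \sup_{s\le t}\mathbb{E}\|\mf{U}^\e(s)\|_{\mf{H}^\e}^q$ (or first to $\mathbb{E}\|\mf{U}^\e(t)\|_{\mf{H}^\e}^q$ pointwise in $t$, which is what \eqref{uq} asks for) yields $\sup_{[0,T]}\mathbb{E}\|\mf{U}^\e(t)\|_{\mf{H}^\e}^q \le C(q,T)(1+\mathbb{E}\|\mf{U}^\e_0\|_{\mf{H}^\e}^q)$, and splitting the norm into its two blocks as above gives \eqref{uq}.

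The main obstacle, and the point that needs care rather than cleverness, is tracking the powers of $\e$ so that all constants are genuinely $\e$-independent: the boundary terms carry an intrinsic $\e^{-1}$ (or $\e^{-(n-1)}$ after scaling) blow-up from the trace and from the measure of $\partial O^\e$, and this must be exactly compensated by the $\e$ and $\e^2$ factors built into the operator $\mf{A}^\e$, the datum $\mf{G}^\e$, and the initial condition $\mf{U}^\e_0 = (u_0^\e, \e v_0^\e)$. Concretely this means using Lemma \ref{traceineqe} and Lemma \ref{Reh} in precisely the weighted forms stated (so that $\e\|\cdot\|_{L^2(\partial O^\e)}^2$, not $\|\cdot\|_{L^2(\partial O^\e)}^2$, is what appears), checking that assumption \eqref{eqCT} indeed delivers $\e$-uniform control of $\|\mf{G}^\e\|_Q^2$, and invoking \eqref{u0} for the initial data. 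Once the bookkeeping is set up, \eqref{uniform1} and \eqref{uniform2} are immediate restatements of \eqref{U-average1}--\eqref{U-average2}, and \eqref{uq} is the routine $L^q$-moment extension of the same energy argument via the It\^o formula for $x\mapsto x^{q/2}$ and Gronwall's lemma.
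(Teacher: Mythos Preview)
Your proposal is correct and takes essentially the same approach as the paper: the paper's own proof consists of the single sentence ``\eqref{uniform1}, \eqref{uniform2} and \eqref{uq} are consequences of Theorem \ref{mild},'' and what you have written is precisely the unpacking of that sentence via the block decomposition $\|\mf{U}^\e\|_{\mf{H}^\e}^2 = \|u^\e\|_{L^2(D^\e)^n}^2 + \e^2\|\gamma^\e u^\e\|_{L^2(\partial O^\e)^n}^2$. Your additional remarks on the rescaled $H^{1/2}$-trace estimate and the It\^o formula for $x\mapsto x^{q/2}$ to obtain \eqref{uq} go beyond what the paper spells out, but they are the natural way to fill in the details the paper leaves implicit.
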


\begin{proof}
\eqref{uniform1}, \eqref{uniform2} and \eqref{uq} are consequences of Theorem \ref{mild}. 
\end{proof}

For any function $z^\e$ defined in $D^\e$ let us denote by $\wt{z}^{\e}$ the extension of $z^{\e}$ to $D$ by
\begin{equation}
\wt{z}^{\e}:=\left\{\begin{array}{lr}
z^{\e} &{\rm on}\ D^\e\\
0 & {\rm on} \ D\setminus D^\e
\end{array}
\right.
\end{equation}

\begin{theorem}\label{Pe}
Let us define $U^\e(t)=\int_0^t u^\e(s)ds$, where $u^\e$ is given by the previous theorem, with $U^\e\in C([0,T]; V^\e)$. Then, there exists a unique vector valued random variable $P^\e: \Omega \mapsto C([0,T]; L^2(D^\e))$, such that for every $t\in [0,T]$ we have:

\begin{equation}\label{varforPt}
\begin{split}
\int_{D^\e} \left(u^\e(t)-u^\e_{0}-\int_{0}^{t}f(s)-\int_{0}^{t}g_{1}(s)dW_{1}(s)\right) \phi dx
+\int_{D^\e}\left(\nu \nabla U^\e(t) \nabla\phi-P^\e(t) \operatorname{div}\phi\right) dx\\
=\int_{\partial O^\e} \left(\e^2 u^\e(t)-\e^2 v^\e_0 +\int_{0}^{t} \e g_{2}^\e(s)dW_{2}(s) ds
-\e b U^\e(t)\right) \phi d\sigma ,
\end{split}
\end{equation}
$\mathbb{P}$-a.s., and for every $\phi \in H_0^1(D)^n$.

Moreover,
\begin{equation}
\label{estPet}
\sup_{0\leq t\leq T}\E\|P^\e(t)\|_{L^2(D^\e)^n}^{2}\leq C(T).
\end{equation}

\end{theorem}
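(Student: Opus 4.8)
The plan is to recover $P^\e$ by a De Rham–type argument applied to the equation integrated once in time, keeping every constant uniform in $\e$. First I would turn the weak formulation \eqref{var} into an identity at a fixed time $t$ valid for divergence-free test fields. Taking $\bm{\phi}=(\phi,\e\gamma^\e\phi)$ with $\phi\in V^\e$, using $\langle(\mf{A}^\e)^{1/2}\mf{U}^\e(s),(\mf{A}^\e)^{1/2}\bm{\phi}\rangle=\nu\int_{D^\e}\nabla u^\e(s)\cdot\nabla\phi\,dx+\e b\int_{\partial O^\e}\gamma^\e u^\e(s)\cdot\gamma^\e\phi\,d\sigma$, integrating in $s$ and writing $U^\e(t)=\int_0^t u^\e(s)\,ds\in C([0,T];V^\e)$ (a primitive of an $L^1(0,T;V^\e)$ function), one obtains an identity of the form
\[
\int_{D^\e}\Big(u^\e(t)-u^\e_0-\int_0^t f\,ds-\int_0^t g_1\,dW_1\Big)\cdot\phi\,dx+\nu\int_{D^\e}\nabla U^\e(t)\cdot\nabla\phi\,dx=\int_{\partial O^\e}\ell^\e(t)\cdot\gamma^\e\phi\,d\sigma
\]
for every $\phi\in V^\e$, where $\ell^\e(t)$ is the linear combination of the boundary traces $\e^2\gamma^\e u^\e(t)$, $\e^2 v^\e_0$, $\e\,\gamma^\e U^\e(t)$ and $\int_0^t\e g_2^\e\,dW_2$ occurring on the right-hand side of \eqref{varforPt}. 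Since every $\phi\in H^1_0(D)^n$ restricts to an element of $\{v\in H^1(D^\e)^n:v=0\text{ on }\partial D\}$, and conversely such a $v$ extends to $H^1_0(D)^n$ (a Sobolev extension into the holes, which do not touch $\partial D$), this identity holds in particular for every divergence-free $\phi\in H^1_0(D)^n$.

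Next, for fixed $(t,\omega)$ I would introduce the ``pressure-defect'' functional $\Lambda^\e(t)\colon H^1_0(D)^n\to\mathbb{R}$ equal to the difference of the two sides above, now for an arbitrary $\phi\in H^1_0(D)^n$; by the previous step $\Lambda^\e(t)$ annihilates $\{\phi\in H^1_0(D)^n:\operatorname{div}\phi=0\text{ in }D^\e\}$. The key point is that $\Lambda^\e(t)\in H^{-1}(D)^n$ with $|\langle\Lambda^\e(t),\phi\rangle|\le\Theta^\e(t)\,\|\phi\|_{H^1(D)^n}$ for a random $\Theta^\e(t)\ge 0$ satisfying $\sup_{t\in[0,T]}\mathbb{E}[(\Theta^\e(t))^2]\le C(T)$ with $C(T)$ independent of $\e$. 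The interior terms are controlled by Cauchy--Schwarz and the Poincar\'e inequality in $H^1_0(D)^n$ together with \eqref{uniform2} for $u^\e$, \eqref{u0} for $u^\e_0$, the It\^o isometry with \eqref{eqCT} for $\mathbb{E}\|\int_0^t g_1\,dW_1\|^2_{L^2(D^\e)^n}\le\int_0^t\|g_1(s)\|^2_{Q_1}\,ds\le C_TT$, and $\mathbb{E}\|\nabla U^\e(t)\|^2_{L^2(D^\e)^{n\times n}}\le T\,\mathbb{E}\int_0^t\|u^\e\|^2_{V^\e}\,ds\le C(T)$ from \eqref{uniform1}. For the boundary term I would use $\|\gamma^\e\phi\|_{L^2(\partial O^\e)^n}\le C\e^{-1/2}\|\phi\|_{H^1(D)^n}$ (Lemma \ref{traceineqe}) and then check that each piece of $\ell^\e(t)$ carries enough powers of $\e$ to absorb the loss: $\e^{-1/2}\e^2\|\gamma^\e u^\e(t)\|_{L^2(\partial O^\e)^n}=\e^{1/2}(\e^2\|\gamma^\e u^\e(t)\|^2_{L^2(\partial O^\e)^n})^{1/2}$, bounded in $L^2(\Omega)$ by \eqref{uniform2}; likewise for $\e^2 v^\e_0$ via \eqref{u0}; $\e^{-1/2}\e b\|\gamma^\e U^\e(t)\|_{L^2(\partial O^\e)^n}\le C\|U^\e(t)\|_{H^1(D^\e)^n}$ again by Lemma \ref{traceineqe}; and, by the It\^o isometry and Lemmas \ref{traceineqe}--\ref{Reh} applied to $g_2^\e=g_{21}+\mathcal{R}^\e g_{22}$,
\[
\e\,\mathbb{E}\Big\|\int_0^t g_2^\e\,dW_2\Big\|^2_{L^2(\partial O^\e)^n}=\int_0^t\e\|g_2^\e(s)\|^2_{Q_2}\,ds\le C\int_0^T\big(\|g_{21}(s)\|^2_{Q_2}+\|g_{22}(s)\|^2_{Q_2}\big)\,ds\le C(T),
\]
so $\e^{-1/2}\|\int_0^t\e g_2^\e\,dW_2\|_{L^2(\partial O^\e)^n}$ is bounded in $L^2(\Omega)$ uniformly in $(\e,t)$.

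Then I would recover $P^\e$ via a right inverse of the divergence whose norm is uniform in $\e$: given $\psi\in L^2(D^\e)$, extend it to $\tilde\psi\in L^2(D)$ by the constant $-|O^\e|^{-1}\int_{D^\e}\psi$ on $O^\e$, so that $\int_D\tilde\psi=0$ and, since $|O^\e|$ is bounded below for small $\e$, $\|\tilde\psi\|_{L^2(D)}\le C\|\psi\|_{L^2(D^\e)}$ uniformly; the Bogovskii operator of the \emph{unperforated} domain $D$ then yields $\mathcal{B}^\e\psi\in H^1_0(D)^n$ with $\operatorname{div}\mathcal{B}^\e\psi=\psi$ in $D^\e$ and $\|\mathcal{B}^\e\psi\|_{H^1(D)^n}\le C\|\psi\|_{L^2(D^\e)}$, $C$ independent of $\e$. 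Uniqueness of $P^\e(t)$ is then immediate (if $\int_{D^\e}p\operatorname{div}\phi=0$ for all $\phi\in H^1_0(D)^n$, test with $\phi=\mathcal{B}^\e p$ to get $p=0$), and existence follows by defining $\langle P^\e(t),\psi\rangle_{L^2(D^\e)}:=-\langle\Lambda^\e(t),\mathcal{B}^\e\psi\rangle$, which is independent of the chosen preimage because $\Lambda^\e(t)$ annihilates divergence-free fields, is linear in $\psi$, and obeys $\|P^\e(t)\|_{L^2(D^\e)}\le C\,\Theta^\e(t)$; this is exactly the relation \eqref{varforPt}, and taking expectations gives $\sup_{0\le t\le T}\mathbb{E}\|P^\e(t)\|^2_{L^2(D^\e)^n}\le C^2\sup_t\mathbb{E}[(\Theta^\e(t))^2]\le C(T)$, i.e.\ \eqref{estPet}. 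Continuity of $t\mapsto\Lambda^\e(t)$ into $H^{-1}(D)^n$ follows from the continuity of $t\mapsto u^\e(t)\in\mf{H}^\e$, $t\mapsto U^\e(t)\in V^\e$, $t\mapsto\int_0^t f$, $t\mapsto\int_0^t g_i\,dW_i$ and the local uniformity in $t$ of the bounds above; since $\mathcal{B}^\e$ does not depend on $t$, $P^\e\in C([0,T];L^2(D^\e))$ $\mathbb{P}$-a.s., and measurability of $\omega\mapsto P^\e$ as a $C([0,T];L^2(D^\e))$-valued random variable is inherited from that of $u^\e$, $U^\e$ and the stochastic integrals.

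The main obstacle is the $\e$-uniform estimate of $\Lambda^\e(t)$ in the second step: once the boundary integrals are bounded via the trace inequality at the cost of a factor $\e^{-1/2}$, the explicit powers of $\e$ built into the boundary data ($\e^2 u^\e$, $\e^2 v^\e_0$, $\e b\,U^\e$, $\e g_2^\e\,dW_2$) and into the scaling assumptions \eqref{eqCT}, \eqref{u0} must compensate exactly — this is precisely where the particular form of the stochastic Fourier boundary condition enters. The other perforation-sensitive issue, solvability of $\operatorname{div}\phi=\psi$ with an $\e$-independent bound, is bypassed altogether in the third step by extending $\psi$ into the holes and inverting the divergence on the fixed domain $D$.
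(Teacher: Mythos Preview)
Your argument is correct but follows a different route from the paper. The paper first tests \eqref{var} with $\bm{\phi}=(\phi,0)$, i.e.\ with $\phi\in V^\e\cap H^1_0(D^\e)^n$ (vanishing also on $\partial O^\e$), invokes Temam's de~Rham result to produce $P^\e(t)\in L^2(D^\e)/\mathbb{R}$, and only then enlarges the class of test functions to $\{\phi\in H^1(D^\e)^n:\phi|_{\partial D}=0\}$, matching the normal stress with the boundary data and fixing the additive constant $c^\e(t)$; the estimate \eqref{estPet} is obtained by extending $P^\e$ by zero, bounding $\|\nabla\widetilde{P}^\e(t)\|_{H^{-1}(D)}$ exactly through the same term-by-term inequalities you use, and appealing to Temam's inequality $\|p\|_{L^2}\le C\|\nabla p\|_{H^{-1}}$. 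You instead test with $\bm{\phi}=(\phi,\e\gamma^\e\phi)$ from the start, so the boundary contributions are already present, and recover the pressure in one stroke by an explicit right inverse of the divergence built on the \emph{fixed} domain $D$ (extend $\psi$ by a constant into the holes, then apply the Bogovskii operator of $D$). This buys you two things at once: the pressure is unique without any normalization (because $\{\operatorname{div}\phi|_{D^\e}:\phi\in H^1_0(D)^n\}$ is all of $L^2(D^\e)$), and the $\e$-uniform $L^2$ bound is immediate from the $\e$-independent Bogovskii constant. The paper's two-step approach, on the other hand, makes the interaction between the constant in the pressure and the Fourier boundary condition explicit and relies only on textbook citations; the actual analytic content---the term-by-term control of the functional via Lemmas~\ref{traceineqe}--\ref{Reh}, \eqref{eqCT}, \eqref{u0}, \eqref{uniform1}--\eqref{uniform2} and the It\^o isometry---is identical in both proofs.
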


\begin{proof}
We recall \eqref{var} which says that $\mf{U}^\e(t)=(u^\e, \e\gamma_{\e}(u^\e) )$ is a variational  solution in the following sense:

\begin{equation*}
\langle \mf{U}^\e(t), \bm{\phi}\rangle+\int_{0}^{t}\langle (\mf{A}^\e)^{1/2} \mf{U}^\e(s),
(\mf{A}^\e)^{1/2}\bm{\phi}\rangle ds
=\langle \mf{U}^\e_{0},\bm{\phi}\rangle 
+\int_{0}^{t}\langle \mf{F}(s),\bm{\phi}\rangle ds
+\int_{0}^{t}\langle \mf{G}^\e(s)d\mf{W}(s),\bm{\phi}\rangle
\end{equation*}
$\mathbb{P}-{\rm a.s.}$ for $t\in [0,T]$ and $\bm{\phi}\in \mf{V}^\e$.

In this variational formulation we consider the test function $\bm{\phi}=(\phi,0)$, where $\phi=P^{\e}\bm{\phi}\in V^\e$, hence 
$\mathbb{P}$-a.s. we get

\begin{equation}
\label{preP}
\langle u^\e(t)-u^{\e}_{0}-\nu\int_{0}^{t} \Delta u^\e(s)ds
-\int_{0}^{t}f(s)-\int_{0}^{t}g_{1}(s)dW_{1}(s),\phi\rangle=0
\end{equation}
$\forall t\in [0, T], \quad \forall \phi\in V^\e \cap H_0^1(D^\e)^n$.

Using Proposition I.1.1 and I.1.2. of \cite{temam}, we find for each $t\in [0,T]$,
 the existence of some $P^\e(t)\in L^{2}(D^\e)^n / \mathbb{R}$ such that $\mathbb{P}$-a.s.

\begin{equation}
\label{defPt}
u^\e(t)-u^\e_{0}-\nu \Delta U^\e(t)
-\int_{0}^{t}f(s)-\int_{0}^{t}g_{1}(s)dW_{1}(s)=-\nabla P^\e(t),
\end{equation}
where,  
$$L^{2}(D^\e)^n / \mathbb{R}=\left\{ P\in L^{2}(D^\e)^n, \quad \int_{D^{\e}}P(x)dx=0\right\}.$$

In particular, $\nabla P^\e\in C([0,T]; H^{-1}(D^\e)^n) \quad \mathbb{P}-{\rm a.s}.$.

The equation \eqref{defPt} may be written equivalently:
\begin{equation}
\label{defPt2}
\begin{split}
\int_{D^\e}\left(u^\e(t) - u^\e_{0}\right)\phi dx+\int_{D^\e}\left(\nu \nabla U^\e(t) \nabla\phi-
P^\e(t) \operatorname{div} \phi \right)dx\\
=\int_{D^\e}\int_{0}^{t}f(s)\phi dsdx+\int_{D^\e}\int_{0}^{t}g_{1}(s)\phi dW_{1}(s) dx,
\end{split}
\end{equation}
for every $\phi\in H^1_0(D^\e)^n$.
The equation \eqref{defPt} implies that $\nu\dfrac{\partial U^\e(t)}{\partial n}-P^\e(t) n 
\in H^{-\frac{1}{2}}(\partial O^\e)^n$, so for every $\phi \in H^1(D^\e)^n$, with $\phi_{|\partial D} =0$ we have
\begin{equation*}
\begin{split}
\left\langle u^\e(t)-u^\e_{0}-\int_{0}^{t}f(s)-\int_{0}^{t}g_{1}(s)dW_{1}(s), \phi \right\rangle
+\int_{D^\e}\left(\nu \nabla U^\e(t) \nabla\phi-P^\e(t) \operatorname{div}\phi\right) dx=\\
\left\langle \nu\dfrac{\partial U^\e(t)}{\partial n}-P^\e(t) n, \phi\right\rangle_{H^{-\frac{1}{2}}(\partial O^\e)^n,H^{\frac{1}{2}}(\partial O^\e)^n},
\end{split}
\end{equation*}
which implies, using \eqref{var} that
\begin{equation}
\label{4}
\begin{split}
-\left\langle \nu\dfrac{\partial U^\e(t)}{\partial n}-P^\e(t) n, 
\phi\right\rangle_{H^{-\frac{1}{2}}(\partial O^\e)^n,H^{\frac{1}{2}}(\partial O^\e)^n}\\
=\int_{\partial O^\e} \left(\e^2 u^\e(t) - \e \overline{u^\e_0} + \e b U^\e(t) 
- \int_0^t g_2^\e(s) d W_2(s)\right)\phi d\sigma,
\end{split}
\end{equation}
for every $\phi\in H^{\frac{1}{2}}(\partial O^\e)^n$ such that $\int_{\partial O^\e} \phi n d\sigma  =0$. So, there exists $c^\e\in C([0,T])$ such that if we re-denote $P^\e(t)=P^\e(t)+c^\e(t)$, \eqref{4} is satisfied for every $\phi \in H^{\frac{1}{2}}(\partial O^\e)^n$.  Using again \eqref{var},  we showed that there exists a unique $P^\e \in C([0,T],L^2(D^\e))$, such that for $t\in [0,T]$ \eqref{varforPt} holds.

To show \eqref{estPet} we consider the extension to the whole domain of the pressure $\wt{P}^\e(t)$, and compute $$\E\| \nabla \wt{P}^\e(t)\|_{H^{-1}(D)^n}^2 = \E \sup_{\overset{\phi \in H_0^1(D)^n}{\| \phi\|_{H_0^1(D)^n} \leq 1}} \left | \int_D  \wt{P}^\e(t) \operatorname{div} \phi dx \right |^2, $$ from \eqref{varforPt}. We show that it is bounded uniformly for $t \in [0,T]$, and then apply the result given by Proposition 1.2 from \cite{temam} to obtain \eqref{estPet}.
We estimate first $\left | \displaystyle\int_D  \wt{P}^\e(t) \operatorname{div} \phi dx \right |$ from \eqref{varforPt} and obtain by applying H{\"o}lder's inequality:
\begin{equation*}
\begin{split}
\left | \displaystyle\int_D  \wt{P}^\e(t) \operatorname{div} \phi dx \right | &\leq \|\phi\|_{H_0^1(D)^n} \left(\| u^\e(t)\|_{L^2(D^\e)^n} + \| u_0^\e\|_{L^2(D^\e)^n} + T \|f\|_{L^2([0,T]\times D)^n}\right) \\
&+\|\phi\|_{H_0^1(D)^n} \left(\| \int_0^t g_1(s) d W_1(s) \|_{L^2(D^\e)^n} +\nu \| U^\e(t) \|_{H_0^1(D^\e)^n} \right) \\
&+ \|\phi\|_{L^2(\partial O^\e)^n} \left( \|\e^2u^\e(t) + \e^2v_0^\e + \e b U^\e(t) + \int_0^t g_2^\e(s) dW_2(s) \|_{L^2(\partial O^\e)^n}\right).
\end{split}
\end{equation*}
We use Lemma \ref{traceineqe} and the estimates \eqref{uniform1} for $u^\e(t)$ and \eqref{u0} for the initial conditions to get
\begin{equation*}
\begin{split}
&\E\| \nabla \wt{P}^\e(t)\|_{H^{-1}(D)^n}^2 \leq C + C \E \|\int_0^t g_1(s) d W_1\|_{L^2(D^\e)^n}^2 + C \e  \E \|u^\e(t)\|_{L^2(\partial O^\e)^n}^2 +\\
& \e\E\|\int_0^t g_{21}(s) d W_2(s)\|_{L^2(\partial O^\e)^n}^2 +\e\E\|\int_0^t \mathcal{R}^\e g_{22}(s) d W_2(s)\|_{L^2(\partial O^\e)^n}^2.
\end{split}
\end{equation*}
Ito's isometry gives
\begin{equation*}
\begin{split}
&\E\| \nabla \wt{P}^\e(t)\|_{H^{-1}(D)^n}^2 \leq C + C\int_0^t \sum_{i=1}^\infty \lambda_{i1} \|g_1(s)e_{i1}\|_{L^2(D^\e)^n}^2ds + \\
& C\e\int_0^t \sum_{i=1}^\infty \lambda_{i2} \|g_{21}(s)e_{i2}\|_{L^2(\partial O^\e)^n}^2ds +C\e\int_0^t \sum_{i=1}^\infty \lambda_{i2} \|\mathcal{R}^\e g_{22}(s)e_{i2}\|_{L^2(\partial O^\e)^n}^2ds.
\end{split}
\end{equation*}
We use again Lemmas \ref{traceineqe} and \ref{Reh} and then property \eqref{eqCT} to obtain that $\E\| \nabla \wt{P}^\e(t)\|_{H^{-1}(D)^n}^2$ is bounded uniformly for $t \in [0,T]$.
\end{proof}

\section{Two scale convergence}
\label{section4}
We will summarize in this section several results about the two scale convergence that we will use throughout the paper. For the results stated without proofs, see \cite{A-2s} or \cite{JL13}. First we establish some notations of spaces of periodic functions. We denote by $C_{\#}^k (Y)$ the space of functions from $C^k(\o{Y})$, that have $Y-$ periodic boundary values. By $L^2_{\#}(Y)$ we understand the closure of $C_{\#}(Y)$ in $L^2(Y)$ and by $H^1_{\#}(Y)$ the closure of $C_{\#}^1(Y)$ in $H^1(Y)$.

\begin{definition}
\label{2scdef}
We say that a sequence $u^\e \in L^2(\Omega\times [0,T]\times D)^n$ two-scale converges to $u \in L^2(\Omega\times [0,T]\times D \times Y)^n$, and denote this convergence by
$$u^\e \overset{2-s}{\longrightarrow} u\ \ \ \mbox{ in } \Omega\times [0,T]\times D,$$
if for every $\Psi \in L^2(\Omega\times [0,T]\times D; C_{\#} (Y))^n$ we have
\begin{equation*}
\lim_{\e \to 0} \int_\Omega\int_0^T \int _D u^\e (\omega,t,x) \Psi(\omega,t,x,\frac{x}{\e}) dx dt d\mathbb{P} =\int_\Omega\int_0^T\int_D\int_Y u(\omega,t,x,y) \Psi(\omega,t,x,y) dy dx dt d\mathbb{P}.
\end{equation*}
\end{definition}

\begin{theorem}
\label{2scex}
Assume the sequence $u^\e$ is uniformly bounded in $L^2(\Omega\times [0,T]\times D)^n$. Then there exists a subsequence $(u^{\e'})_{\e' >0}$ and $u^0 \in L^2(\Omega\times [0,T]\times D \times Y)^n $ such that $u^{\e'}$ two-scale converges to $u^0$ in $L^2(\Omega\times [0,T] \times D)^n$.
\end{theorem}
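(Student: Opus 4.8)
The plan is to follow the classical Nguetseng--Allaire compactness argument, treating the pair $(\omega,t)\in\Omega\times[0,T]$ as an extra ``slow'' parameter on which no oscillation occurs; the whole scheme of \cite{A-2s}, \cite{JL13} then carries over with only cosmetic changes. Set $\mathcal X:=L^2(\Omega\times[0,T]\times D;C_{\#}(Y))^n$, a separable Banach space (separability of $C_{\#}(Y)$ being the point), and for each $\e>0$ define the linear functional $\mu_\e$ on $\mathcal X$ by
\[
\mu_\e(\Psi)=\int_\Omega\int_0^T\int_D u^\e(\omega,t,x)\cdot\Psi\Big(\omega,t,x,\tfrac{x}{\e}\Big)\,dx\,dt\,d\mb P .
\]

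First I would record the elementary oscillation estimate: for every $\Psi\in\mathcal X$ the map $(\omega,t,x)\mapsto\Psi(\omega,t,x,x/\e)$ is jointly measurable, lies in $L^2(\Omega\times[0,T]\times D)^n$, and
\[
\limsup_{\e\to0}\Big\|\Psi\big(\cdot,\cdot,\cdot,\tfrac{\cdot}{\e}\big)\Big\|_{L^2(\Omega\times[0,T]\times D)^n}\le\|\Psi\|_{\mathcal X},
\]
with the limit actually equal to $\|\Psi\|_{L^2(\Omega\times[0,T]\times D\times Y)^n}$ when $\Psi$ is of product form $\sum_k\varphi_k(\omega,t,x)\psi_k(y)$, $\varphi_k\in L^2(\Omega\times[0,T]\times D)$, $\psi_k\in C_{\#}(Y)$ (these being dense in $\mathcal X$). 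This is proved exactly as in the deterministic case: the product case reduces to the mean value property of periodic functions, and the general case follows by density, while the extra variables $(\omega,t)$ are handled by dominated convergence using the bound $|\Psi(\omega,t,x,x/\e)|^2\le\|\Psi(\omega,t,x,\cdot)\|_{C_{\#}(Y)}^2\in L^1(\Omega\times[0,T]\times D)$. Combined with Cauchy--Schwarz and the uniform bound $\|u^\e\|_{L^2(\Omega\times[0,T]\times D)^n}\le M$, this gives $\limsup_{\e\to0}|\mu_\e(\Psi)|\le M\|\Psi\|_{\mathcal X}$, so $(\mu_\e)_{\e}$ is eventually bounded in $\mathcal X'$.

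Next I would invoke separability of $\mathcal X$ and the Banach--Alaoglu theorem to extract a subsequence $(\e')$ along which $\mu_{\e'}\rightharpoonup\mu$ weakly-$*$ in $\mathcal X'$. Passing to the limit in the oscillation estimate along $(\e')$ gives $|\mu(\Psi)|\le M\|\Psi\|_{L^2(\Omega\times[0,T]\times D\times Y)^n}$ for all product $\Psi$, hence for all $\Psi\in\mathcal X$ by density; therefore $\mu$ extends uniquely to a bounded linear functional on the Hilbert space $L^2(\Omega\times[0,T]\times D\times Y)^n$. By the Riesz representation theorem there is $u^0\in L^2(\Omega\times[0,T]\times D\times Y)^n$ with
\[
\mu(\Psi)=\int_\Omega\int_0^T\int_D\int_Y u^0(\omega,t,x,y)\cdot\Psi(\omega,t,x,y)\,dy\,dx\,dt\,d\mb P ,
\]
and since $\lim_{\e'\to0}\mu_{\e'}(\Psi)=\mu(\Psi)$ for every $\Psi\in\mathcal X$, this is precisely the statement $u^{\e'}\overset{2-s}{\longrightarrow}u^0$ in $L^2(\Omega\times[0,T]\times D)^n$.

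The only genuinely technical point — and the one I would flag as the main obstacle — is the oscillation lemma in the presence of the parameter $(\omega,t)$: one must verify joint measurability of $\Psi(\omega,t,x,x/\e)$ and upgrade the pointwise-in-$(\omega,t)$ mean value convergence to convergence of the $L^2$ norms over all of $\Omega\times[0,T]\times D$. Both are handled by the dominated convergence theorem in $(\omega,t)$ once the deterministic statement is available, using the integrable majorant noted above. Everything else is a verbatim transcription of the arguments in \cite{A-2s} or \cite{JL13}.
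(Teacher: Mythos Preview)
Your proposal is correct and reproduces exactly the classical Nguetseng--Allaire compactness argument that the paper defers to: the paper does not supply its own proof of this theorem but refers the reader to \cite{A-2s} and \cite{JL13}, whose arguments are precisely the ones you outline. There is nothing to add.
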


\begin{cor}
\label{2sc-w}
Assume the sequence $u^\e\in L^2(\Omega\times [0,T]\times D)^n$, two-scale converges to $u^0 \in L^2(\Omega\times [0,T]\times D \times Y)^n$. Then, $u^\e$ converges weakly in $L^2(\Omega\times [0,T]\times D)^n$ to $\displaystyle\int_Y u^0(\omega,t,x,y) dy$.
\end{cor}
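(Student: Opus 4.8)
The plan is to specialize the defining property of two-scale convergence to test functions that do not depend on the fast variable $y$. Fix an arbitrary $\phi\in L^2(\Omega\times[0,T]\times D)^n$ and regard it as an element $\Psi$ of $L^2(\Omega\times[0,T]\times D;C_{\#}(Y))^n$ by setting $\Psi(\omega,t,x,y):=\phi(\omega,t,x)$, i.e.\ independent of $y$. This is legitimate because $\|\Psi(\omega,t,x,\cdot)\|_{C_{\#}(Y)}=|\phi(\omega,t,x)|$, so $\Psi$ has exactly the integrability required in Definition \ref{2scdef}. For such a $\Psi$ one has $\Psi(\omega,t,x,\tfrac{x}{\e})=\phi(\omega,t,x)$ for every $\e>0$, so the left-hand side in Definition \ref{2scdef} equals $\int_\Omega\int_0^T\int_D u^\e(\omega,t,x)\,\phi(\omega,t,x)\,dx\,dt\,d\mathbb{P}$, while on the right-hand side the $y$-integration factors through, producing $\int_\Omega\int_0^T\int_D\big(\int_Y u^0(\omega,t,x,y)\,dy\big)\phi(\omega,t,x)\,dx\,dt\,d\mathbb{P}$.

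Consequently $\int u^\e\phi\to\int\big(\int_Y u^0\,dy\big)\phi$ for every $\phi\in L^2(\Omega\times[0,T]\times D)^n$, which is precisely the assertion that $u^\e$ converges weakly in $L^2(\Omega\times[0,T]\times D)^n$ to $\int_Y u^0(\omega,t,x,y)\,dy$. It remains to note that the candidate limit is a genuine element of $L^2(\Omega\times[0,T]\times D)^n$: since $|Y|=1$, Cauchy--Schwarz (Jensen) gives $\big|\int_Y u^0(\omega,t,x,y)\,dy\big|^2\le\int_Y|u^0(\omega,t,x,y)|^2\,dy$, and integrating over $\Omega\times[0,T]\times D$ yields $\big\|\int_Y u^0\,dy\big\|_{L^2(\Omega\times[0,T]\times D)^n}\le\|u^0\|_{L^2(\Omega\times[0,T]\times D\times Y)^n}<\infty$.

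There is no substantial obstacle in this argument. The only point deserving a word of care is the admissibility remark — that constant-in-$y$ functions indeed belong to the class $L^2(\Omega\times[0,T]\times D;C_{\#}(Y))^n$ of admissible test functions in Definition \ref{2scdef} — which is immediate from the displayed norm identity above. One should also observe that the uniform $L^2$-bound on $(u^\e)$, which is the hypothesis of Theorem \ref{2scex}, is not actually needed for this implication: weak convergence follows directly from the convergence of the pairings established in the first paragraph, since by definition weak convergence means exactly that $\langle u^\e,\phi\rangle\to\langle u,\phi\rangle$ for all $\phi$ in the dual space.
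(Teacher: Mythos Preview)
Your proof is correct and is precisely the standard argument for this fact (the paper does not supply its own proof but refers to \cite{A-2s}, where exactly this specialization of the test function to a $y$-independent one is used). Your remarks on the admissibility of constant-in-$y$ test functions and on the $L^2$ membership of $\int_Y u^0\,dy$ are accurate and complete the argument.
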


\begin{theorem}
\label{2scgrad}
Let for any $\e > 0$, $u^\e \in L^2(\Omega\times [0,T] ; H^1(D^\e)^n)$ such that $u^\e = 0$ on $\partial D$. Assume that $u^\e$ is a sequence uniformly bounded with respect to $\e > 0$, i.e. $$\sup_\e || u^\e ||_{L^2(\Omega\times [0,T] ; H^1(D^\e)^n)} < \infty.$$ Then, there exists $u^0 \in L^2(\Omega\times [0,T]; H_0^1(D)^n)$ and $u^1 \in L^2(\Omega\times [0,T]\times D ; H^1_{\#}(Y)^n)$, such that
$$\widetilde{u}^\e (\omega,t, x) \overset{2-s}{\longrightarrow} u^0 (\omega,t,x) \mathds{1}_{Y^*}(y)$$
and
$$\widetilde{\nabla u}^\e (\omega,t, x) \overset{2-s}{\longrightarrow} \left(\nabla_x u^0(\omega,t, x) + \nabla_y u^1 (\omega,t,x,y) \right)\mathds{1}_{Y^*}(y).$$
\end{theorem}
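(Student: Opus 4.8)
The plan is to prove the two statements by the standard extension/two-scale argument of Allaire and Nguetseng, adapted to the extra probability--time parameter. First, because $u^\e=0$ on $\partial D$ and $\widetilde{u}^\e$ agrees with $u^\e$ on $D^\e$ and vanishes on $D\setminus D^\e$, a Poincar\'e-type inequality (valid since $D$ is bounded) together with the hypothesis $\sup_\e\|u^\e\|_{L^2(\Omega\times[0,T];H^1(D^\e)^n)}<\infty$ gives $\sup_\e\big(\|\widetilde{u}^\e\|_{L^2(\Omega\times[0,T]\times D)^n}+\|\widetilde{\nabla u}^\e\|_{L^2(\Omega\times[0,T]\times D)^{n\times n}}\big)<\infty$. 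By Theorem \ref{2scex} applied on the parameter space $\Omega\times[0,T]$, extract a subsequence along which $\widetilde{u}^\e\overset{2-s}{\longrightarrow}\chi$ and $\widetilde{\nabla u}^\e\overset{2-s}{\longrightarrow}\Xi$ for some $\chi\in L^2(\Omega\times[0,T]\times D\times Y)^n$ and $\Xi\in L^2(\Omega\times[0,T]\times D\times Y)^{n\times n}$. Since $\widetilde{u}^\e$ and $\widetilde{\nabla u}^\e$ are supported in $D^\e=D\setminus\overline{O^\e}$, testing the two-scale limit against $\Psi(\omega,t,x,y)$ supported in $y\in\overline{O}$ forces $\chi$ and $\Xi$ to vanish for $y\in O$; hence $\chi=\chi\,\mathds{1}_{Y^*}(y)$ and $\Xi=\Xi\,\mathds{1}_{Y^*}(y)$.

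Next I identify the structure of the limits. The identification $\chi(\omega,t,x,y)=u^0(\omega,t,x)\mathds{1}_{Y^*}(y)$ for some $u^0$ independent of $y$: take $\Psi(\omega,t,x,y)=\e\,\varphi(\omega,t,x)\,\psi(y)$ with $\psi\in C^\infty_{\#}(Y)^n$ and use the fact that $\widetilde{\nabla u}^\e$ is bounded to see that $\int_{\Omega}\int_0^T\int_D\int_Y \chi\,\partial_{y_i}\psi_j\,\varphi\,dy\,dx\,dt\,d\mathbb{P}=0$, which means $\nabla_y\chi=0$ in the distributional sense, so $\chi$ does not depend on $y$ on the connected set $Y^*$; write $\chi=u^0\mathds{1}_{Y^*}$. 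By Corollary \ref{2sc-w}, $\widetilde{u}^\e\rightharpoonup |Y^*|\,u^0$ weakly in $L^2(\Omega\times[0,T]\times D)^n$; combined with the weak $H^1_0(D)$ bound on a genuine $H^1_0(D)$-extension of $u^\e$ (available since $u^\e$ is uniformly $H^1$-bounded away from the holes and vanishes on $\partial D$), one gets $u^0\in L^2(\Omega\times[0,T];H_0^1(D)^n)$ and, passing to the limit in $\int\widetilde{\nabla u}^\e\cdot\Psi(\cdot,x/\e)$ against $y$-divergence-free test fields, that $\Xi=(\nabla_x u^0+\nabla_y u^1)\mathds{1}_{Y^*}$ for some $u^1\in L^2(\Omega\times[0,T]\times D;H^1_{\#}(Y)^n)$.

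The key identity to extract $u^1$ is the relation between the two-scale limits of $\widetilde{u}^\e$ and $\widetilde{\nabla u}^\e$: for any $\Psi\in C^\infty_{\#}(Y)^{n\times n}$ with $\mathrm{div}_y\Psi=0$ and $\Psi\cdot n=0$ on $\partial O$, and $\varphi\in C_c^\infty(\Omega\times[0,T]\times D)$, integrate by parts in $x$ in $\int\widetilde{\nabla u}^\e_{ij}\,\varphi\,\Psi_{ij}(x/\e)$, using $\mathrm{div}_y\Psi=0$ to kill the $\e^{-1}$ term and $\Psi\cdot n=0$ to kill the boundary term on $\partial O^\e$; passing to the limit and comparing with the limit expressed through $\chi=u^0\mathds{1}_{Y^*}$ shows $\int_{Y^*}(\Xi-\nabla_x u^0)\cdot\Psi\,dy=0$ for all such $\Psi$, whence $\Xi-\nabla_x u^0\mathds{1}_{Y^*}$ is, for a.e. $(\omega,t,x)$, a $\nabla_y$ of some $H^1_{\#}(Y)^n$ function $u^1$ (this is the orthogonal-decomposition lemma on $Y^*$: the orthogonal complement of divergence-free fields tangent to $\partial O$ is exactly the space of gradients of periodic functions). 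Measurability of $u^1$ in $(\omega,t,x)$ follows from choosing $u^1$ with zero mean over $Y^*$. The main obstacle is this last structural identification — getting the correct function space $H^1_{\#}(Y)^n$ for $u^1$ and handling the hole boundary $\partial O$ carefully in the integration by parts — but it is the classical Allaire argument and goes through verbatim since the parameters $(\omega,t)$ are inert throughout.
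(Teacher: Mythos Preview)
The paper does not supply its own proof of Theorem~\ref{2scgrad}; it lists the result among those ``stated without proofs, see \cite{A-2s} or \cite{JL13}.'' Your proposal reproduces precisely the classical Allaire argument for perforated domains that those references contain, with the parameters $(\omega,t)$ carried along inertly, and it is correct.

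Two minor technical points deserve sharper phrasing. First, when you show $\nabla_y\chi=0$ on $Y^*$, the integration by parts is performed on $D^\e$ (not on $D$, since $\widetilde{u}^\e\notin H^1(D)$), and to kill the boundary term on $\partial O^\e$ you need the test function $\psi\in C^\infty_{\#}(Y)$ to vanish in a neighborhood of $\overline{O}$; this still gives $\nabla_y\chi=0$ distributionally in the open connected set $Y^*$, which suffices. Second, the Helmholtz-type orthogonal decomposition on $Y^*$ (with periodic conditions on $\partial Y$ and the tangency condition on $\partial O$) yields a priori only $u^1\in H^1_{\#}(Y^*)^n$; to match the statement $u^1\in H^1_{\#}(Y)^n$ one invokes a further $H^1$-extension from $Y^*$ into $O$, which is available since $\overline{O}\subset Y$ and is harmless because only $\nabla_y u^1\big|_{Y^*}$ enters the two-scale limit. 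Your reliance on a uniform $H^1_0(D)$-extension of $u^\e$ to obtain $u^0\in L^2(\Omega\times[0,T];H^1_0(D)^n)$ is legitimate here (such operators exist for holes compactly contained in the cells, e.g.\ Cioranescu--Saint~Jean~Paulin), and is the standard route.
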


\begin{theorem}
\label{2scint}
Assume that the sequence $u^\e$ two scale converges to $u\in L^2(\Omega\times[0,T] \times D\times Y)^n$. Then the sequence $U^\e(\omega,t,x)$ defined by
$$U^\e(\omega,t,x) = \int_0^t u^\e(\omega, s, x) ds$$
two scale converges to $\displaystyle\int_0^t u(\omega, s, x,y) ds$.
\end{theorem}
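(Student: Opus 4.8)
### Proof proposal for Theorem \ref{2scint}

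\textbf{Overall strategy.} The statement is a Fubini-type argument: I want to test $U^\e$ against an admissible oscillating test function $\Psi(\omega,t,x,x/\e)$, interchange the time integral coming from the definition of $U^\e$ with the space integral and the expectation, and then apply the two-scale convergence of $u^\e$ with a suitably modified test function. The main subtlety is to produce, from $\Psi$, an admissible test function for the sequence $u^\e$ so that the two-scale limit hypothesis on $u^\e$ can be invoked, and to justify all the interchanges of integrals by an $L^2$-integrability bound.

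\textbf{Step 1: a priori bound on $U^\e$.} First I would check that $(U^\e)_\e$ is itself uniformly bounded in $L^2(\Omega\times[0,T]\times D)^n$, so that Theorem \ref{2scex} applies to it and the notion of two-scale limit is meaningful. This follows from the Cauchy--Schwarz inequality in time: for fixed $(\omega,x)$, $|U^\e(\omega,t,x)|^2 = \big|\int_0^t u^\e(\omega,s,x)\,ds\big|^2 \le T\int_0^T |u^\e(\omega,s,x)|^2\,ds$, and then integrating in $(\omega,t,x)$ gives $\|U^\e\|_{L^2(\Omega\times[0,T]\times D)^n}^2 \le T^2 \|u^\e\|_{L^2(\Omega\times[0,T]\times D)^n}^2$, which is bounded since $u^\e$ two-scale converges and is therefore (Corollary \ref{2sc-w}, or directly the hypothesis of Theorem \ref{2scex}) uniformly bounded.

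\textbf{Step 2: pass to the limit against test functions.} Fix $\Psi \in L^2(\Omega\times[0,T]\times D; C_\#(Y))^n$. By definition I must evaluate $\lim_{\e\to0}\int_\Omega\int_0^T\int_D U^\e(\omega,t,x)\,\Psi(\omega,t,x,x/\e)\,dx\,dt\,d\mathbb{P}$. Substituting $U^\e(\omega,t,x)=\int_0^t u^\e(\omega,s,x)\,ds$ and using Fubini on the variables $(s,t)$ (legitimate because the integrand is in $L^1$ by Step 1 and Cauchy--Schwarz), the inner region $\{0\le s\le t\le T\}$ can be rewritten so that the expression becomes $\int_\Omega\int_0^T\int_D u^\e(\omega,s,x)\,\widehat\Psi(\omega,s,x,x/\e)\,dx\,ds\,d\mathbb{P}$, where $\widehat\Psi(\omega,s,x,y):=\int_s^T \Psi(\omega,t,x,y)\,dt$. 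The key point is that $\widehat\Psi$ is again an admissible test function, i.e. $\widehat\Psi\in L^2(\Omega\times[0,T]\times D;C_\#(Y))^n$: continuity and $Y$-periodicity in $y$ are preserved under integration in $t$, and the $L^2$ bound follows from Minkowski's integral inequality. Then the two-scale convergence of $u^\e$ (the hypothesis of the theorem) applied with test function $\widehat\Psi$ yields the limit $\int_\Omega\int_0^T\int_D\int_Y u(\omega,s,x,y)\,\widehat\Psi(\omega,s,x,y)\,dy\,dx\,ds\,d\mathbb{P}$. Undoing the Fubini interchange (now with $u$ in place of $u^\e$, again justified by integrability of $u$) rewrites this as $\int_\Omega\int_0^T\int_D\int_Y \big(\int_0^t u(\omega,s,x,y)\,ds\big)\Psi(\omega,t,x,y)\,dy\,dx\,dt\,d\mathbb{P}$, which is exactly the pairing of $\int_0^t u(\omega,s,x,y)\,ds$ against $\Psi$. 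Since $\Psi$ was arbitrary, $U^\e \overset{2-s}{\longrightarrow} \int_0^t u(\omega,s,x,y)\,ds$.

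\textbf{Main obstacle.} The only real work is the careful bookkeeping of the double time integral and the verification that $\widehat\Psi$ lands in the correct test-function space $L^2(\Omega\times[0,T]\times D;C_\#(Y))^n$; once that is in place everything else is a direct application of the definition of two-scale convergence. One should also note that, a priori, Theorem \ref{2scex} only gives a two-scale limit of $U^\e$ along a subsequence; the argument above identifies that limit uniquely as $\int_0^t u(\omega,s,x,y)\,ds$ for \emph{every} convergent subsequence, hence the full sequence converges.
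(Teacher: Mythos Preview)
Your proof is correct and takes a genuinely different route from the paper's. The paper first tests $u^\e$ against indicator-in-time functions $\mathds{1}_{[0,t]}(s)\Psi_1(\omega,x,y)$ to obtain, for each fixed $t$, the convergence of $\int_\Omega\int_D U^\e(\omega,t,x)\Psi_2(\omega,t,x,x/\e)\,dx\,d\mathbb{P}$; it then upgrades this almost-everywhere pointwise convergence in $t$ to convergence of the full $\int_0^T$ integral via a H{\"o}lder bound and the dominated convergence theorem. Your approach instead performs a single Fubini swap in the $(s,t)$ variables, producing the transformed test function $\widehat\Psi(\omega,s,x,y)=\int_s^T\Psi(\omega,t,x,y)\,dt$, and applies the hypothesis on $u^\e$ directly once. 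The cost of your argument is the verification that $\widehat\Psi$ lies in $L^2(\Omega\times[0,T]\times D;C_\#(Y))^n$, which you correctly flag and which indeed follows from Minkowski's inequality together with the obvious preservation of continuity and periodicity in $y$ under $t$-integration. The benefit is that you avoid the two-stage pointwise-plus-domination argument entirely; your proof is more direct and also makes the uniqueness of the limit (hence convergence of the full sequence, not just a subsequence) immediate, a point the paper's proof leaves implicit.
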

\begin{proof}
We have for the sequence $u^\e$ that
\begin{equation*}
\lim_{\e \to 0} \int_\Omega\int_0^T \int _D u^\e (\omega,t,x) \Psi(\omega,t,x,\frac{x}{\e}) dx dt d\mathbb{P} =\int_\Omega\int_0^T\int_D\int_Y u(\omega,t,x,y) \Psi(\omega,t,x,y) dy dx dt d\mathbb{P},
\end{equation*}
for every $\Psi \in L^2(\Omega\times [0,T]\times D; C_{\#} (Y))^n$. Now if we choose $\Psi$ to be of the form
$$\Psi (\omega,s,x,y) = \mathds{1}_{[0,t]}(s)\Psi_1(\omega,x,y),$$
we obtain that
\begin{equation*}
\lim_{\e \to 0} \int_\Omega\int_0^t \int _D u^\e (\omega,s,x) \Psi_1(\omega,x,\frac{x}{\e}) dx d\mathbb{P} =\int_\Omega\int_0^t\int_D\int_Y u(\omega,s,x,y) \Psi_1(\omega,x,y) dy dx d\mathbb{P},
\end{equation*}
for any fixed $t\in[0,T]$ and any $\Psi_1 \in L^2(\Omega \times D; C_{\#}(Y))^n$. As a consequence, for $\Psi_2 \in L^2(\Omega\times [0,T]\times D; C_{\#} (Y))^n$ the sequence
$$h^\e(t) = \int_\Omega\int_0^t \int _D u^\e (\omega,s,x) \Psi_2(\omega,t,x,\frac{x}{\e}) dx ds d\mathbb{P} = \int_\Omega \int _D U^\e (\omega,t,x) \Psi_2(\omega,t,x,\frac{x}{\e}) dx d\mathbb{P}$$
is convergent for almost every $t\in[0,T]$ to
$$h(t) = \int_\Omega\int_0^t\int_D\int_Y u(\omega,s,x,y) \Psi_2(\omega,t,x,y) dy dx ds d\mathbb{P} = \int_\Omega\int_D\int_Y U(\omega,t,x,y) \Psi_2(\omega,t,x,y) dy dx d\mathbb{P}.$$
By applying H{\"o}lder's inequality,
$$h^\e(t) \leq ||u^\e||_{L^2(\Omega\times[0,T]\times D)^n} ||\Psi_2(t)||_{L^2(\Omega\times D \times Y)^n}.$$
We use the dominated convergence theorem to deduce that
$$\int_0^T h^\e(t) dt \to \int_0^T h(t) dt.$$
\end{proof}

\section{Homogenized equation}
\label{section5}
In order to find the homogenized equation, we first establish the two scale limits that we use in the variational formulation \eqref{varforPt} to pass to the limit when $\e\to 0$.
\subsection{Limit of the variational formulation \eqref{varforPt}}
\label{subsection51}

\

We notice that $\wt{u}_0^\e$ satisfies $\mathbb{E}\|\widetilde{u}^\e_{0}\|_{L^2(D^\e)^n}^2\leq C$ according to \eqref{u0}, so there exists $u_0 \in L^2(\Omega; L^2(D;L^2_{\#}(Y))^n$ such that 

\begin{equation}
\label{2s1}
\widetilde{u}_0^\e (x) \overset{2-s}{\longrightarrow} u_0 (x,y) \mathds{1}_{Y^*}(y)\ \ \ \mbox{ in } \Omega\times D.
\end{equation}

We recall that the processes $\wt{u}^\e$ and $\wt{\nabla u}^\e$ defined on the stochastic basis
$(\Omega, \mathcal{F}, (\mathcal{F}_{t}), \mathbb{P})$ are uniformly bounded in $L^2(\Omega;C([0,T]; L^2(D)^n))$ as well as in $L^2(\Omega;L^2(0,T;L^2(D)^{n\times n}))$, so using Theorem \ref{2scgrad} there exist $u \in  L^2(\Omega; L^2(0,T; H^1_0(D)^n))$ and $u_1 \in L^2(\Omega ; L^2(0,T; L^2(D;H^1_{\#}(Y))^n))$ such that 

\begin{equation}
\label{2s2}
\widetilde{u}^\e (\omega,t, x) \overset{2-s}{\longrightarrow} u (\omega,t,x) \mathds{1}_{Y^*}(y)\ \ \ \mbox{ in } \Omega\times [0,T]\times D,
\end{equation}
and
\begin{equation}
\label{2s3}
\widetilde{\nabla u}^\e (\omega,t, x) \overset{2-s}{\longrightarrow} \left(\nabla_x u(\omega,t, x) + \nabla_y u_1 (\omega,t,x,y) \right)\mathds{1}_{Y^*}(y)\ \ \ \mbox{ in } \Omega\times [0,T]\times D.
\end{equation}

We use Theorem \ref{2scint} to obtain as consequences of \eqref{2s2} and \eqref{2s3} the following two scale convergences:

\begin{equation}
\label{2s4}
\widetilde{U}^\e (\omega,t, x) \overset{2-s}{\longrightarrow} U (\omega,t,x) \mathds{1}_{Y^*}(y)\ \ \ \mbox{ in } \Omega\times [0,T] \times D,
\end{equation}
and

\begin{equation}
\label{2s5}
\widetilde{\nabla U}^\e (\omega,t, x) \overset{2-s}{\longrightarrow} \left(\nabla_x U(\omega,t, x) + \nabla_y U_1 (\omega,t,x,y) \right)\mathds{1}_{Y^*}(y)\ \ \ \mbox{ in } \Omega\times [0,T] \times D,
\end{equation}
where $U (\omega,t,x)=\displaystyle\int_0^t u (\omega,s,x) ds$ and $U_1 (\omega,t,x,y)=\displaystyle\int_0^t u_1 (\omega,s,x,y) ds$.

Also \eqref{estPet} gives us the existence of $P \in L^2(\Omega\times[0,T]\times D \times Y)^n$ such that:

\begin{equation}
\label{2s6}
\widetilde{P}^\e (\omega,t, x) \overset{2-s}{\longrightarrow} P(\omega,t,x,y) \mathds{1}_{Y^*}(y)\ \ \ \mbox{ in } \Omega\times[0,T]\times D.
\end{equation}

Now we are able to pass to the limit in \eqref{varforPt}. We will take a test function of the form $$\phi^\e(\omega,t,x)=\phi(\omega,t,x)+\e\psi(\omega,t,x,\dfrac{x}{\e}),$$ and we use the decompositions
\begin{equation}
\label{testf1}
\phi(\omega,t,x) = \phi_1(t)\phi_2(\omega) \phi_3(x),
\end{equation}
where $\phi_1 \in C_0^\infty (0,T)$, $\phi_2\in L^\infty(\Omega)$ and $\phi_3\in C_0^\infty (D)^n$ and
\begin{equation}
\label{testf2}
\psi(\omega, t, x,y) = \psi_1(t)\psi_2(\omega) \psi_3(x) \psi_4(y),
\end{equation}
where $\psi_1 \in C_0^\infty (0,T)$, $\psi_2\in L^\infty(\Omega)$, $\psi_3\in C_0^\infty (D)^n$ and $\psi_4\in C_{\#}^\infty(Y)$. We obtain after integrating over $\Omega\times[0,T]$:
\begin{equation}
\label{effeq1}
\begin{split}
&\int_{\Omega}\int_0^T\int_{D} \wt{u}^\e (\omega,t,x) \phi^\e(\omega,t,x) dx dt d\mathbb{P} +\e^2 \int_{\Omega}\int_0^T\int_{\partial O^\e} u^\e (\omega,t,x') \phi^\e(\omega,t,x') d\sigma(x')dt d\mathbb{P}-\\
&\int_{\Omega}\int_0^T\int_{D} \wt{u}_0^\e (\omega,x) \phi^\e(\omega,t,x) dx dt d\mathbb{P}-\e^2 \int_{\Omega}\int_0^T\int_{\partial O^\e} v_0^\e  (x') \phi^\e(\omega,t,x')d\sigma(x')dt d\mathbb{P}+\\
&\int_{\Omega} \int_0^T\int_{D} \int_0^t \nu  \wt{\nabla u}^\e (\omega,s,x) \nabla \phi^\e(\omega,t,x) ds dx dt d\mathbb{P}+\\
&\int_{\Omega}\int_0^T \int_{\partial O^\e} \int_0^t\e b u^\e (\omega,s,x') \phi^\e(\omega,t,x') ds d\sigma(x')dt d\mathbb{P} =\\
&\int_{\Omega}\int_0^T\int_{D} \wt{P}^\e(\omega,t,x)  \operatorname{div} \phi^\e(\omega,t,x) dx dt d\mathbb{P} + \int_{\Omega}\int_0^T \int_{D^\e} \int_0^t f (s,x) \phi^\e(\omega,t,x)ds dx dt d\mathbb{P}+ \\
&\int_{\Omega} \int_0^T\int_{D^\e} \int_0^t g_1 (s) dW_1(s) \phi^\e(\omega,t,x) dx  dt d\mathbb{P}+\\
& \int_{\Omega}\int_0^T  \int_{\partial O^\e} \int_0^t \e g_2^\e (s)dW_2(s) \phi^\e(\omega,t,x')d\sigma(x')dt d\mathbb{P}.
\end{split}
\end{equation}
We will compute the limit when $\e \to 0$ in \eqref{effeq1} term by term:
\\

As a consequence of \eqref{2s2} and \eqref{2s1} which implies the weak convergences in $L^2(\Omega \times [0,T] \times D)^n$ of the sequences $\widetilde{u}^\e(\omega,t,x)$ and $\widetilde{u}_0^\e(\omega,x)$ to $\displaystyle\int_Y \ u (\omega,t,x) \mathds{1}_{Y^*} (y) dy$ and to $ \displaystyle\int_Y u_0 (\omega,x)\mathds{1}_{Y^*} (y) dy$ we have:
\begin{equation}
\label{limit1}
\lim_{\e\to 0} \int_\Omega \int _0^T \int_{D} \wt{u}^\e (\omega,t,x) \phi^\e(\omega,t,x) dx dt d\mathbb{P} = |Y^*| \int_\Omega\int_0^T \int_D u (\omega,t,x) \phi(\omega,t,x) dx dt d\mathbb{P},
\end{equation}
and

\begin{equation}
\label{limit3}
\lim_{\e\to 0} \int_{\Omega}\int _0^T\int_{D} \wt{u}_0^\e (\omega,x) \phi^\e(\omega,t,x) dx dt d\mathbb{P} = |Y^*| \int_\Omega\int _0^T \int_D u_0 (\omega,x) \phi(\omega,t,x) dx dt d\mathbb{P}.
\end{equation}

The estimates \eqref{u0}, \eqref{uniform1}, \eqref{uniform2} imply that:

\begin{equation}
\label{limit2&4}
\e^2 \int_{\Omega}\int _0^T\int_{\partial O^\e} u^\e (\omega,t,x) \phi^\e(\omega,t,x')d\sigma(x')d\mathbb{P}= \e^2 \int_{\Omega}\int _0^T\int_{\partial O^\e} v_0^\e  (x) \phi^\e(\omega,t,x')d\sigma(x')dt d\mathbb{P} = 0.
\end{equation}

We use the two scale convergences \eqref{2s5} and \eqref{2s6} and that $\dfrac{\partial}{\partial x_i} \phi^\e = \dfrac{\partial \phi}{\partial x_i} + \e \dfrac{\partial \psi}{\partial x_i} + \dfrac{\partial \psi}{\partial y_i}$ to deduce that

\begin{equation}
\label{limit5}
\begin{split}
\lim_{\e\to 0} &\int_{\Omega}\int_0^T\int_{D} \nu  \wt{\nabla U}^\e (\omega,t,x) \nabla \phi^\e(\omega,t,x) dxdtd\mathbb{P} =\\
&\int_\Omega\int_0^T\int_D \int_{Y^*} \nu \left(\nabla_x U (\omega,t,x) + \nabla_y U_1 (\omega,t,x,y)  \right) \left(\nabla_x \phi (\omega,t,x) + \nabla_y \psi (\omega,t,x,y)\right) dy dxdt d\mathbb{P},
\end{split}
\end{equation}
and

\begin{equation}
\label{limit7}
\begin{split}
\lim_{\e\to 0} &\int_{\Omega}\int_0^T\int_{D} \wt{P}^\e (\omega,t,x) \operatorname{div} \phi^\e (\omega,t, x) dx dt d\mathbb{P} =\\
&\int_\Omega \int_0^T \int_D \int_{Y^*} P (\omega,t,x) \left(\operatorname{div}_x \phi(\omega,t,x) + \operatorname{div}_y \psi(\omega,t,x,y)\right) dy dx dt d\mathbb{P}.
\end{split}
\end{equation}

Now, if we fix $\omega\in\Omega$ and $t\in[0,T]$ the sequence of integrals $$\displaystyle\int_{D^\e} \int_0^t f(s,x) \phi^\e (\omega,t,x,\frac{x}{\e}) dsdx = \displaystyle\int_{D} \int_0^t f(s,x) \phi^\e (\omega,t,x,\frac{x}{\e})\mathds{1}_{D^\e}(x) dsdx$$ converges using formula (5.5) from \cite{A-2s} to $$\displaystyle |Y^*|\int_D \int_0^t f(s,x) \phi (\omega,t,x) dsdx,$$ so after applying Vitali's theorem we obtain:
\begin{equation}
\label{limit8}
\lim_{\e\to 0} \int_\Omega\int_0^T \int_{D^\e} \int_0^t  f(s,x) \phi^\e(\omega,t,x) ds dx dt d\mathbb{P} = |Y^*|\int_\Omega \int_0^T \int_D  \int_0^t f(s,x) \phi(\omega,t,x) ds dx dt d\mathbb{P}.
\end{equation}

Now we compute the limits of the stochastic integrals and of the integrals over the boundary $\partial O^\e$, and the results we obtain are shown in the next three Lemmas.
\begin{lemma}
\label{lemmalimit9}
\begin{equation}
\label{limit9}
\lim_{\e\to 0} \int_{\Omega}\int_0^T \int_{D^\e} \int_0^t g_1 (s) dW_1(s) \phi^\e(\omega,t,x) dx dt d\mathbb{P} = |Y^*|\int_\Omega \int_0^T \int_D  \int_0^t g_1 (s) dW_1(s) \phi(\omega,t,x) dx dt d\mathbb{P}.
\end{equation}
\end{lemma}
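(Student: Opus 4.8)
The plan is to reduce the convergence of the stochastic term to the (already established) two-scale convergence of a deterministic quantity by an It\^o-isometry argument, exploiting that $W_1$ is a $Q_1$-Wiener process of trace class and that the test function $\phi^\e = \phi + \e\psi(\cdot,x/\e)$ differs from $\phi$ by a term of order $\e$. First I would observe that, since $\phi^\e - \phi$ is bounded by $C\e$ in $L^\infty(\Omega\times[0,T]\times D)^n$ and $\int_0^t g_1(s)\,dW_1(s)$ is bounded in $L^2(\Omega\times[0,T]\times D)^n$ uniformly in $\e$ (this bound being independent of $\e$, as $g_1$ does not depend on $\e$ and \eqref{eqCT} holds), the contribution of the $\e\psi$-part vanishes in the limit. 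So it suffices to prove
\[
\lim_{\e\to 0}\int_\Omega\int_0^T\int_{D^\e}\int_0^t g_1(s)\,dW_1(s)\,\phi(\omega,t,x)\,dx\,dt\,d\mathbb{P}
= |Y^*|\int_\Omega\int_0^T\int_D\int_0^t g_1(s)\,dW_1(s)\,\phi(\omega,t,x)\,dx\,dt\,d\mathbb{P}.
\]

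Next I would use the decomposition \eqref{testf1}, $\phi(\omega,t,x)=\phi_1(t)\phi_2(\omega)\phi_3(x)$, to pull the $t$- and $\omega$-factors outside and isolate the spatial integral $\int_{D^\e}\big(\int_0^t g_1(s)\,dW_1(s)\big)(x)\,\phi_3(x)\,dx = \int_D \mathds{1}_{D^\e}(x)\,\big(\int_0^t g_1(s)\,dW_1(s)\big)(x)\,\phi_3(x)\,dx$. The key point is that, expanding $\int_0^t g_1(s)\,dW_1(s) = \sum_j \sqrt{\lambda_{j1}}\int_0^t g_1(s)e_{j1}\,d\beta_j(s)$ with independent scalar Brownian motions $\beta_j$, and using It\^o's isometry together with \eqref{eqCT}, the random field $h^\e(\omega,t):=\int_D(\mathds{1}_{D^\e}-|Y^*|)(x)\,\big(\int_0^t g_1(s)\,dW_1(s)\big)(x)\,\phi_3(x)\,dx$ satisfies $\mathbb{E}|h^\e(\omega,t)|^2\to 0$ for each fixed $t$: indeed $\mathbb{E}|h^\e|^2 = \int_0^t\sum_j\lambda_{j1}\big(\int_D(\mathds{1}_{D^\e}-|Y^*|)\,g_1(s)e_{j1}\,\phi_3\,dx\big)^2\,ds$, and each inner integral tends to $0$ because $\mathds{1}_{D^\e}\rightharpoonup |Y^*|$ weakly-$*$ in $L^\infty(D)$ (equivalently, the mean value property of the periodic characteristic function, formula (5.5) of \cite{A-2s}), while the sum is dominated by $C_T\|\phi_3\|_{L^2(D)^n}^2$ uniformly in $\e$, so dominated convergence under the $\sum_j\int_0^t$ applies. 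Multiplying by the bounded factors $\phi_1(t)\phi_2(\omega)$, integrating over $\Omega\times[0,T]$ and applying dominated convergence once more in $(t,\omega)$ (the integrand being bounded by $C\,\mathbb{E}|h^\e(\cdot,t)|\le C$) yields the claim.

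The main obstacle — though it is a mild one here — is handling the interchange of the limit with the infinite series defining the stochastic integral: one must check the uniform-in-$\e$ summable domination $\sum_j\lambda_{j1}\big(\int_D(\mathds{1}_{D^\e}-|Y^*|)\,g_1(s)e_{j1}\,\phi_3\,dx\big)^2\le 4\|\phi_3\|_\infty^2\sum_j\lambda_{j1}\|g_1(s)e_{j1}\|_{L^2(D)^n}^2\le 4\|\phi_3\|_\infty^2 C_T$, which is exactly where the trace-class assumption on $Q_1$ and hypothesis \eqref{eqCT} enter; without it the term-by-term passage to the limit would not be justified. Everything else is routine: the $\e$-part of $\phi^\e$ is killed by the $O(\e)$ estimate, and the passage to the homogenized integral over all of $D$ is the standard weak convergence $\mathds{1}_{D^\e}\rightharpoonup|Y^*|$ combined with It\^o's isometry. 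This is why the statement is isolated as a lemma rather than being absorbed into the term-by-term computation above: the stochastic integral requires the extra spectral-decomposition argument, in the same spirit as the treatment of the boundary stochastic integral announced in the introduction.
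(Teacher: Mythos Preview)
Your proof is correct and follows essentially the same approach as the paper: kill the $\e\psi$-term by an $O(\e)$ estimate, expand the stochastic integral in the eigenbasis of $Q_1$, apply It\^o's isometry, and conclude from the weak convergence $\mathds{1}_{D^\e}\rightharpoonup|Y^*|$ combined with the trace-class bound \eqref{eqCT}. The only cosmetic difference is that the paper handles the infinite series by truncating at $N$ and estimating the tail separately, whereas you invoke dominated convergence directly on the series using the uniform summable bound --- both arguments are equivalent.
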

\begin{proof}
We substitute $\phi^\e(\omega,t,x) = \phi(\omega,t,x) + \e \psi(\omega,t,x,\frac{x}{\e})$ and use the decomposition \eqref{testf1} of $\phi^\e$ and we get that \eqref{limit9} can be rewritten as

\begin{equation*}
\lim_{\e\to 0}\int_{\Omega}\phi_1(\omega) \int_0^T \phi_2(t)\int_{D} \int_0^t g_1 (s) dW_1(s) \left(\phi_3(x) (\mathds{1}_{D^\e}(x)-|Y^*|) + \e \psi(\omega, t,x ,\frac{x}{\e})\right) dx dt d\mathbb{P} =0.
\end{equation*}
Now using the H{\"o}lder's inequality, and the fact that $||\psi(\omega,t,x,\frac{x}{\e})||_{L^2(\Omega\times[0,T]\times D)^n}$ is a sequence uniformly bounded in $\e$ by $||  \psi(\omega,t,x,y) ||_{L^2(\Omega\times[0,T]\times D;C_{\#}(Y))^n}$, we obtain:

\begin{equation*}
\begin{split}
&\lim_{\e\to 0}\left|\int_{\Omega}\int_0^T\int_{D} \int_0^t g_1 (s) dW_1(s) \e \psi(\omega, t,x ,\frac{x}{\e}) dx dt d\mathbb{P}\right| \leq \\
&\lim_{\e\to 0} \e T^2 ||\psi(\omega,t,x,\frac{x}{\e})||_{L^2(\Omega\times[0,T]\times D)^n} \sup_{t\in[0,T]} ||g_1(t)||^2_{Q_1} = 0.
\end{split}
\end{equation*}

Using H{\"o}lder's inequality again:
\begin{equation*}
\begin{split}
&\lim_{\e\to 0}\left|\int_{\Omega}\phi_1(\omega) \int_0^T \phi_2(t)\int_{D} \int_0^t g_1 (s) dW_1(s) \phi_3(x) (\mathds{1}_{D^\e}(x)-|Y^*|) dx dt d\mathbb{P} \right|\leq \\
&\lim_{\e\to 0} ||\phi_1 ||_{L^2(0,T)} ||\phi_2||_{L^2(\Omega)} \int_\Omega\int_0^T \left| \int_D \int_0^t g_1 (s) dW_1(s) \phi_3(x) (\mathds{1}_{D^\e}(x)-|Y^*|) dx \right|^2 dt d\mathbb{P}.
\end{split}
\end{equation*}

We rewrite the integral as follows:
\begin{equation*}
\begin{split}
&\int_{\Omega} \int_0^T |\int_{D} \sum_{i=1}^\infty \sqrt{\lambda_{i1}}\int_0^t g_1 (s) e_{i1}(x) d\beta_i(s) \phi_3(x) (\mathds{1}_{D^\e}(x)-|Y^*|)dx|^2 dt d\mathbb{P}.
\end{split}
\end{equation*}
where $(\beta_i)_{i=1}^\infty$ is a sequence of real valued independent Brownian motions, and $(e_{i1})_{i=1}^\infty$  and  $(\lambda_{i1})_{i=1}^\infty$ are previously defined in \eqref{eqCT}.

We apply the stochastic Fubini theorem and the It\^o's isometry and the sum becomes
\begin{equation*}
\begin{split}
&\sum_{i=1}^\infty \lambda_{i1}  \int_{\Omega}\int_0^T |\int_0^t d\beta_i(s) \int_{D} g_1 (s) e_{i1}(x) (\mathds{1}_{D^\e}(x)-|Y^*|) \phi_2(x) dx|^2  dt d\mathbb{P}=\\
&\sum_{i=1}^\infty \lambda_{i1} \int_0^T |\int_0^t ds \int_{D} g_1 (s) e_{i1}(x) (\mathds{1}_{D^\e}(x)-|Y^*|) \phi_2(x) dx|^2  dt \leq \\
&C \sum_{i=1}^N \lambda_{i1} |\int_0^t ds \int_{D} g_1 (s) e_{i1}(x) (\mathds{1}_{D^\e}(x)-|Y^*|) \phi_2(x) dx|^2 + C \sum_{i=N}^\infty \lambda_{i1} ||g_1 (s) e_{i1}(x)||^2_{L^2(D)^n},
\end{split}
\end{equation*}
for any $N \in \mathbb{Z}_+^*$, and for a constant $C$ independent of $N$. The first sum goes to $0$ from dominated convergence theorem and the weak convergence to $0$ in $L^2(D)$ of $(\mathds{1}_{D^\e}(x)-|Y^*|)\phi_2(x)$. The second term goes to $0$ when $N \to \infty$ because of \eqref{eqCT}. Hence, \eqref{limit9} follows.
\end{proof}
We compute the limits that involve integrals over the boundaries, and we will make use of the techniques from \cite{CDE96} that give a way of transforming integrals over the surface in integrals over the volume.
\begin{lemma}
\label{lemmalimit6}
\begin{equation}
\label{limit6}
\begin{split}
\lim_{\e\to 0} & \int_{\Omega}\int_0^T \int_{\partial O^\e} \int_0^t\e b u^\e (\omega,s,x') \phi^\e(\omega,t,x') dsd\sigma(x') dt d\mathbb{P} =\\
&|\partial O|\int_{\Omega} \int_0^T\int_D \int_0^t b u (\omega,s,x) \phi(\omega,t,x) ds dx dt d\mathbb{P} .
\end{split}
\end{equation}
\end{lemma}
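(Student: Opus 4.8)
The plan is to use the standard device from \cite{CDE96} of rewriting a surface integral over $\partial O^\e$ as a volume integral over $D^\e$ (or rather over a suitable layer), against a corrector that converges two-scale to a quantity whose $Y$-average involves $|\partial O|$. Concretely, on each cell $\e(k+Y)$ one introduces the auxiliary $Y$-periodic function $w$ solving a cell problem of the type $-\Delta_y w = |\partial O| - |\partial O|\,\delta_{\partial O}$ (in the distributional sense), i.e. $\int_{Y^*} \nabla_y w \cdot \nabla_y \varphi\, dy = |\partial O| \int_{Y^*}\varphi\,dy - \int_{\partial O}\varphi\,d\sigma$ for all $Y$-periodic $\varphi$; equivalently one uses the vector field $\chi(y)$ with $\operatorname{div}_y\chi = |\partial O|\,\mathds{1}_{Y^*}$ in $Y^*$ and $\chi\cdot n = 1$ on $\partial O$, which is the exact form used in \cite{CDE96}. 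Rescaling, $\chi^\e(x):=\e\,\chi(x/\e)$ satisfies $\operatorname{div}\chi^\e = |\partial O|\,\mathds{1}_{D^\e}$ in $D^\e$ and $\chi^\e\cdot n = 1$ on $\partial O^\e$, with $\|\chi^\e\|_{L^\infty}\le C\e$ and $\|\nabla\chi^\e\|_{L^\infty}\le C$.

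First I would integrate by parts: for a smooth test function $\Phi$ (playing the role of $\int_0^t b\,u^\e(\omega,s,\cdot)\,ds\;\phi^\e(\omega,t,\cdot)$, suitably mollified in $x$ if needed) one writes
\begin{equation*}
\int_{\partial O^\e}\Phi\,d\sigma = \int_{\partial O^\e}\Phi\,(\chi^\e\cdot n)\,d\sigma = \int_{D^\e}\operatorname{div}(\Phi\,\chi^\e)\,dx = |\partial O|\int_{D^\e}\Phi\,dx + \int_{D^\e}\chi^\e\cdot\nabla\Phi\,dx.
\end{equation*}
Apply this with $\Phi = \int_0^t b\,u^\e(\omega,s,x)\,ds\;\phi^\e(\omega,t,x)$ and then integrate over $\Omega\times[0,T]$. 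The factor $\e$ in front of $\e b\,u^\e$ in \eqref{limit6} is exactly what is needed: the left side of \eqref{limit6} becomes $\e^{-1}\cdot\e = $ order one after this manipulation — more precisely, because $\chi^\e$ carries a factor $\e$, the boundary integral $\int_{\partial O^\e}(\cdots)d\sigma$ scaled by $\e$ is converted into volume integrals of order $1$: the main term $|\partial O|\int_{D^\e}(\int_0^t b\,u^\e\,ds)\,\phi^\e\,dx$ plus a remainder $\int_{D^\e}\chi^\e\cdot\nabla\big[(\int_0^t b\,u^\e\,ds)\phi^\e\big]dx$. The remainder, when one expands the gradient by Leibniz, produces a term with $\nabla\phi^\e$ (bounded by $C\e\cdot\|\widetilde{\nabla U}^\e\|_{L^2}\cdot\|\phi^\e\|$, which vanishes) and a term with $\widetilde{\nabla u}^\e$ contracted against $\chi^\e$ of size $\e$ — here I use $\|\chi^\e\|_{L^\infty}\le C\e$ together with the uniform $L^2$-bound \eqref{uniform1} on $\widetilde{\nabla u}^\e$ to send it to $0$. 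For the main term, I would invoke the two-scale convergence \eqref{2s4}, or rather the weak $L^2(\Omega\times[0,T]\times D)^n$-convergence of $\widetilde{U}^\e$ to $|Y^*|^{-1}\cdot$(its two-scale limit averaged in $y$) — combined with strong convergence of the smooth test function — to pass to the limit, obtaining $|\partial O|\int_\Omega\int_0^T\int_D\int_0^t b\,u(\omega,s,x)\phi(\omega,t,x)\,ds\,dx\,dt\,d\mathbb{P}$. (One checks the $|Y^*|$ from the weak limit of $\widetilde U^\e$ cancels against the $|Y^*|^{-1}$ that is implicit; in fact \eqref{2s4} says $\widetilde U^\e \overset{2-s}{\to} U\mathds 1_{Y^*}$, whose $y$-average is $|Y^*|U$, while $\mathds 1_{D^\e}\to |Y^*|$ weakly, and the product limit is $|Y^*|^2$ against $|\partial O|/|Y^*| \cdot$ ... — I would be careful to track these constants so the final answer has exactly the coefficient $|\partial O|$ as stated, which forces the normalization $\operatorname{div}_y\chi = |\partial O|\,\mathds 1_{Y^*}/|Y^*|$ or an equivalent bookkeeping.)

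The main obstacle I anticipate is twofold. First, the Leibniz expansion of $\operatorname{div}(\Phi\chi^\e)$ requires $\Phi$ to have an $L^2$-in-space gradient; since $u^\e$ only lies in $H^1(D^\e)^n$ and $\phi^\e$ is smooth this is fine, but one must be scrupulous that the extension-by-zero $\widetilde{\nabla u}^\e$ is what appears (the identity $\int_{D^\e}\chi^\e\cdot\nabla(\,\cdot\,)dx = \int_D \chi^\e\cdot\widetilde{\nabla(\cdot)}\,dx$ holds because $\chi^\e$ is supported in $\overline{D^\e}$ up to the interface, and the interface terms are absorbed by the trace). Second — and this is the genuine technical point — passing to the limit in the main volume term $|\partial O|\int_{D^\e}\big(\int_0^t b\,u^\e\,ds\big)\phi^\e dx$ is not a bare weak-times-strong argument because $\phi^\e$ itself oscillates; here one uses that $\phi^\e \to \phi$ strongly in $L^2(\Omega\times[0,T]\times D)^n$ (the $\e\psi(\cdot,x/\e)$ correction is $O(\e)$ in $L^2$), and that $\widetilde{U}^\e$ converges weakly, so the product converges, after which Corollary \ref{2sc-w} (applied to $\widetilde U^\e$) identifies the limit. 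I would handle the $\Omega\times[0,T]$ integration by a Vitali/dominated-convergence argument exactly as in the passage leading to \eqref{limit8}, using the uniform $q$-th moment bound \eqref{uq} with $q>2$ to get the required equi-integrability.
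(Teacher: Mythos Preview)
Your approach is essentially the same as the paper's: the paper takes $\chi=\nabla_y w_1$ where $w_1\in W^{1,\infty}(Y^*)$ solves $-\Delta w_1=-|\partial O|/|Y^*|$ in $Y^*$, $\partial w_1/\partial n=1$ on $\partial O$, $Y$-periodic with zero mean, so that $w_1^\e(x)=\e^2 w_1(x/\e)$ gives exactly $\partial w_1^\e/\partial n=\e$ on $\partial O^\e$ (absorbing the $\e$ in \eqref{limit6} directly) and $\Delta w_1^\e=|\partial O|/|Y^*|$ in $D^\e$; this resolves your bookkeeping worry, since the weak limit $\widetilde U^\e\rightharpoonup |Y^*|\,U$ then cancels the $1/|Y^*|$. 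The remainder $\int_{D^\e}\nabla w_1^\e\cdot\nabla v^\e\,dx\to 0$ and the main-term limit are handled exactly as you describe (the Vitali step is not needed, as the weak convergence already holds in $L^2(\Omega\times[0,T]\times D)^n$).
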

\begin{proof}
We will define as in \cite{CDE96} the solution of the following system:
\begin{equation}
\label{w1}
\left\{
\begin{array}{rll}
-\Delta w_1&= -  \dfrac{|\partial O|}{|Y^*|} &\mbox{ in }\  Y^* , \\
\displaystyle \frac{\partial w_1}{\partial n} &=1&\mbox{ on }\  \partial O, \\
\displaystyle \int_{Y^*} w_1 &=0,& \\
w_1&- Y-periodic,& \\
\end{array}
\right.
\end{equation}
which, according to Remark 4.3 from \cite{CDE96} belongs to $W^{1,\infty}(Y^*)$. We define $w_1^\e(x) = \e^2 w_1(\frac{x}{\e})$, and straightforward calculations show that $w_1^\e$ satisfies:
\begin{equation}
\label{w1e}
\left\{
\begin{array}{rll}
-\Delta w_1^\e&= -\dfrac{|\partial O|}{|Y^*|} &\mbox{ in }\  D^\e , \\
\displaystyle \frac{\partial w_1^\e}{\partial n} &=\e&\mbox{ on }\  \partial O^\e. \\
\end{array}
\right.
\end{equation}
For fixed $\omega\in\Omega$ and $t\in[0,T]$, the sequence $v^\e(w,t,\cdot) = b \displaystyle\int_0^t u^\e (\omega,s,\cdot) \phi^\e(\omega,t,\cdot) ds$ belongs to $W^{1,1} (D^\e)$ and we have:
\begin{equation*}
\begin{split}
&\int_{\partial O^\e} \e v^\e (\omega,t,x') d\sigma(x')  =
\int_{\partial O^\e} \frac{\partial w_1^\e}{\partial n} (x') v^\e (\omega,t,x')d\sigma(x') =\\
&\int_{D^\e} \left(\Delta w_1^\e (x) v^\e (\omega,t,x) + \nabla w_1^\e(x) \nabla v^\e (\omega,t,x)\right) dx.
\end{split}
\end{equation*}
But from \eqref{uniform1} and the definitions of $w_1^\e$ and $v^\e$,  $$|\int_\Omega\int_0^T\int_{D^\e} \nabla w_1^\e(x) \nabla v^\e (\omega,t,x) dx dt d\mathbb{P}| \to 0,$$
and $$\int_\Omega \int_0^T \int _{D^\e} v^\e (\omega,t,x) dx dt d\mathbb{P} \to |Y^*| b \int_\Omega \int _D \int_0^t u (\omega,s,x) \phi(\omega,t,x) ds dx dt d\mathbb{P},$$
which implies \eqref{limit6}.
\end{proof}
\begin{lemma}
\label{lemmalimit10}
\begin{equation*}
\begin{split}
&\lim_{\e\to 0} \int_{\Omega} \int_0^T \int_{\partial O^\e} \int_0^t \e g_2^\e (s)dW_2(s) \phi^\e(\omega,t,x') d\sigma(x')dt d\mathbb{P}=\\
&|\partial O|\int_{\Omega}  \int_0^T \int_D \int_0^t g_{21} (s) dW_2(s) \phi(\omega,t,x) dx dt d\mathbb{P} + \int_\Omega \int_0^T\int_{\partial O} \int_0^t  g_{22}(s) dW_2(s) d\sigma \int_D \phi (\omega,t,x)dxdtd\mathbb{P}.
\end{split}
\end{equation*}
\end{lemma}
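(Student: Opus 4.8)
The plan is to substitute the test function $\phi^\e(\omega,t,x)=\phi(\omega,t,x)+\e\psi(\omega,t,x,x/\e)$ with $\phi,\psi$ as in \eqref{testf1}--\eqref{testf2}, to discard the $\e\psi$ contribution by power counting, to split $\e g_2^\e=\e g_{21}+\e\mathcal{R}^\e g_{22}$, and then to treat the two surviving boundary integrals separately: the Cioranescu--Donato--Ene surface-to-volume trick used in the proof of Lemma \ref{lemmalimit6} will dispose of the non-oscillating part carried by $g_{21}$, while a periodic surface-averaging argument will handle the oscillating part carried by $\mathcal{R}^\e g_{22}$. In both cases the limit $\e\to0$ will be exchanged with $\int_\Omega\int_0^T(\cdot)\,dt\,d\mathbb{P}$ by dominated convergence, the $\e$-independent dominating function being supplied by It\^o's isometry together with \eqref{eqCT} and the scaling Lemmas \ref{traceineqe}, \ref{Reh}.

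\emph{Removing the $\e\psi$ term.} Combining It\^o's isometry with Lemma \ref{traceineqe} applied to $g_{21}(s)e_{j2}\in H^1(D)^n$, Lemma \ref{Reh} applied to $g_{22}(s)e_{j2}\in L^2(\partial O)^n$, and \eqref{eqCT}, I would first record
\[
\E\Big\|\int_0^t\e g_2^\e(s)\,dW_2(s)\Big\|_{L^2(\partial O^\e)^n}^2\le C\e .
\]
Since also $\|\psi(\omega,t,\cdot,\cdot/\e)\|_{L^2(\partial O^\e)^n}\le C\e^{-1/2}$ by a change of variables on each hole as in Lemma \ref{Reh}, Cauchy--Schwarz in $x'$ and in $\Omega$ shows that the $\e\psi$ part of the left-hand side is $O(\e)$. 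It then remains to pass to the limit in $\int_\Omega\int_0^T\int_{\partial O^\e}\big(\int_0^t\e g_2^\e(s)\,dW_2(s)\big)(x')\,\phi(\omega,t,x')\,d\sigma(x')\,dt\,d\mathbb{P}$ with $\phi=\phi_1\phi_2\phi_3$.

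\emph{The $g_{21}$ part.} For fixed $\omega,t$ I would set $\zeta(\omega,t,\cdot)=\big(\int_0^t g_{21}(s)\,dW_2(s)\big)\phi(\omega,t,\cdot)$, which lies $\mathbb{P}$-a.s. in $H^1(D)^n\subset W^{1,1}(D^\e)^n$ and vanishes on $\partial D$, since $g_{21}$ is $H^1(D)^n$-valued and $\phi_3\in C_0^\infty(D)^n$. With $w_1^\e$ as in \eqref{w1e}, the identity used in the proof of Lemma \ref{lemmalimit6} gives
\[
\e\int_{\partial O^\e}\zeta\,d\sigma=\frac{|\partial O|}{|Y^*|}\int_{D^\e}\zeta\,dx+\int_{D^\e}\nabla w_1^\e\cdot\nabla\zeta\,dx .
\]
Because $\|\nabla w_1^\e\|_{L^\infty}\le C\e$ and $\E\int_0^T\|\zeta(\omega,t,\cdot)\|_{W^{1,1}(D)^n}\,dt\le C$ (It\^o's isometry and \eqref{eqCT}), the last integral is $O(\e)$ after $\E\int_0^T$. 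For the first term, $\mathds{1}_{D^\e}\rightharpoonup|Y^*|$ weakly in $L^2(D)$ yields, for fixed $\omega,t$, $\frac{|\partial O|}{|Y^*|}\int_{D^\e}\zeta\,dx\to|\partial O|\int_D\big(\int_0^t g_{21}(s)\,dW_2(s)\big)\phi(\omega,t,x)\,dx$, and the $\e$-uniform bound $|\int_{D^\e}\zeta\,dx|\le C\|\int_0^t g_{21}\,dW_2\|_{L^2(D)^n}\|\phi(\omega,t,\cdot)\|_{L^2(D)^n}\in L^1(\Omega\times[0,T])$ lets dominated convergence deliver the first term on the right of the claim.

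\emph{The $\mathcal{R}^\e g_{22}$ part, and the main obstacle.} Since $\mathcal{R}^\e:L^2(\partial O)^n\to L^2(\partial O^\e)^n$ is bounded and deterministic, it commutes with the It\^o integral, so $\int_0^t\e\,\mathcal{R}^\e g_{22}(s)\,dW_2(s)=\e\,\mathcal{R}^\e Z(\omega,t,\cdot)$ with $Z(\omega,t,\cdot):=\int_0^t g_{22}(s)\,dW_2(s)\in L^2(\partial O)^n$ $\mathbb{P}$-a.s. and $\E\|Z(\omega,t,\cdot)\|_{L^2(\partial O)^n}^2\le TC_T$; this is the step where the trace-class property of $Q_2$ and \eqref{eqCT} are indispensable. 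For fixed $\omega,t$ the periodic surface-averaging limit — the change of variables $x'=\e(k+y)$ on each hole combined with a Riemann-sum argument, valid because $\phi(\omega,t,\cdot)$ is uniformly continuous and $Z(\omega,t,\cdot)\in L^1(\partial O)^n$ (see \cite{CDE96}, \cite{A-2s}) — gives $\e\int_{\partial O^\e}Z(\omega,t,x'/\e)\phi(\omega,t,x')\,d\sigma(x')\to\big(\int_{\partial O}Z(\omega,t,y)\,d\sigma(y)\big)\int_D\phi(\omega,t,x)\,dx$, while Lemma \ref{Reh} bounds the left side by $C\|Z(\omega,t,\cdot)\|_{L^2(\partial O)^n}|\phi_1(t)\phi_2(\omega)|\in L^1(\Omega\times[0,T])$, so dominated convergence applies again; finally, since $h\mapsto\int_{\partial O}h\,d\sigma$ is a bounded functional it also commutes with the It\^o integral, giving $\int_{\partial O}Z(\omega,t,\cdot)\,d\sigma=\int_{\partial O}\int_0^t g_{22}(s)\,dW_2(s)\,d\sigma$, which is the second term on the right of the claim; adding the two contributions concludes. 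The genuinely delicate point is this last part: the surface measure of $\partial O^\e$ blows up like $\e^{-1}$, so the stochastic boundary integral is a priori unbounded, and one must check that the explicit factor $\e$ in front of $g_2^\e$ exactly cancels that blow-up, which is precisely why Lemmas \ref{traceineqe}, \ref{Reh}, It\^o's isometry and \eqref{eqCT} all have to be combined. Alternatively, as sketched in the introduction, one may expand $W_2=\sum_j\sqrt{\lambda_{j2}}\,e_{j2}\,\beta_j$, handle each scalar term by stochastic Fubini and It\^o's isometry, and control the series tail by \eqref{eqCT}.
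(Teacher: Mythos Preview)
Your proof is correct, and for the $g_{21}$ part it follows the paper almost verbatim (the CDE surface-to-volume identity with $w_1^\e$, then $\mathds{1}_{D^\e}\rightharpoonup|Y^*|$). The paper also tacitly discards the $\e\psi$ contribution, so your explicit power-counting step just makes that cleaner.

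The genuine difference is in the $\mathcal{R}^\e g_{22}$ part. The paper does \emph{not} use the Riemann-sum surface averaging you invoke. Instead it expands $W_2=\sum_i\sqrt{\lambda_{i2}}\,e_{i2}\,\beta_i$, sets $h_i(s)=g_{22}(s)e_{i2}\in L^2(\partial O)^n$, and for \emph{each mode} constructs an auxiliary cell function $w_i(s)\in H^1_\#(Y^*)$ solving $-\Delta w_i(s)=-|Y^*|^{-1}\int_{\partial O}h_i(s)\,d\sigma$ in $Y^*$ with $\partial_n w_i(s)=h_i(s)$ on $\partial O$; this is the CDE trick again, but now with nonconstant Neumann data. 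After rescaling to $w_i^\e(s)$ the boundary integral $\e\int_{\partial O^\e}\mathcal{R}^\e h_i(s)\,\phi_3\,d\sigma$ is converted to a volume integral exactly as for $g_{21}$. Stochastic Fubini and It\^o's isometry then reduce everything to $\sum_i\lambda_{i2}\int_0^t|\cdots|^2\,ds$, and the series is handled by the finite-plus-tail split you mention only as an alternative at the very end.

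Your route is shorter and more elementary: commuting the bounded operator $\mathcal{R}^\e$ with the It\^o integral collapses the problem to a single $L^2(\partial O)^n$-valued random variable $Z(\omega,t,\cdot)$, and the pointwise-in-$(\omega,t)$ surface averaging plus dominated convergence (with the dominating function you correctly obtain from Lemmas \ref{traceineqe}, \ref{Reh}) avoids the infinite family of auxiliary cell problems $w_i(s)$ entirely. The paper's approach, on the other hand, keeps a single analytic device (the CDE identity) throughout and makes the trace-class structure of $Q_2$ visible at every step via the explicit eigenbasis expansion; it is what the introduction promised (``we combine the previous idea of rewriting it on the whole domain with the assumption that the Brownian motion is of trace class''), whereas your argument uses the trace-class property only implicitly through \eqref{eqCT} to bound $\E\|Z\|_{L^2(\partial O)^n}^2$.
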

\begin{proof}
Similar arguments discussed in \eqref{limit6} and \eqref{limit9} will be used here. We will prove:
\begin{equation}
\label{limit10'}
\begin{split}
\lim_{\e\to 0} &\int_{\Omega}  \int_0^T \int_{\partial O^\e} \int_0^t \e g_{21} (s) dW_2(s) \phi^\e(\omega,t,x')d\sigma(x')dt d\mathbb{P}=\\
&|\partial O|\int_{\Omega}  \int_0^T \int_D \int_0^t g_{21} (s) dW_2(s) \phi(\omega,t,x) dx dt d\mathbb{P},
\end{split}
\end{equation}
and
\begin{equation}
\label{limit10''}
\begin{split}
\lim_{\e\to 0} &\int_{\Omega}  \int_0^T \int_{\partial O^\e} \int_0^t \e \mathcal{R}^\e g_{22} (s) dW_2(s) \phi^\e(\omega,t,x')d\sigma(x')dt d\mathbb{P} = \\
&\int_\Omega \int_0^T\int_{\partial O} \int_0^t  g_{22}(s) dW_2(s) d\sigma \int_D \phi (\omega,t,x)dxdtd\mathbb{P}.
\end{split}
\end{equation}
To show \eqref{limit10'} we use the functions $w_1$ and $w_1^\e$ defined in \eqref{w1} and \eqref{w1e}:
\begin{equation*}
\begin{split}
& \int_{\partial O^\e} \int_0^t \e g_{21} (s) dW_2(s) \phi^\e(\omega,t,x') d\sigma(x') = \int_{\partial O^\e} \frac{\partial w_1^\e}{\partial n}\int_0^t g_{21} (s) dW_2(s) \phi^\e(\omega,t,x') d\sigma(x') =\\
& \int_{D^\e} \Delta w_1^\e(x) \int_0^t g_{21} (s) dW_2(s) \phi^\e(\omega,t,x') d\sigma(x') +\int_{D^\e} \nabla w_1^\e(x) \int_0^t g_{21} (s) dW_2(s)\nabla_x\phi^\e(\omega,t,x) dx=\\
&\frac{|\partial O|}{|Y^*|} \int_{D^\e} \int_0^t g_{21} (s) dW_2(s) \phi^\e(\omega,t,x') d\sigma(x') +\int_{D^\e} \nabla w_1^\e(x) \int_0^t g_{21} (s)dW_2(s) \nabla_x\phi^\e(\omega,t,x) dx.
\end{split}
\end{equation*}
But
$$\lim_{\e\to 0} \int_\Omega  \int_0^T \int_{D^\e} \nabla w_1^\e(x) \int_0^t g_{21} (s)dW_2(s) \nabla_x\phi^\e(\omega,t,x) dx dt d\mathbb{P} =0 $$
follows from the condition \eqref{eqCT} and the definiton of $w_1^\e$.

Now, the same computation used to show \eqref{limit9} yields:
\begin{equation*}
\begin{split}
\lim_{\e\to 0} &\int_\Omega  \int_0^T \int_{D^\e} \int_0^t g_{21} (s) dW_2(s) \phi^\e(\omega,t,x)dxdt d\mathbb{P} =\\
|&Y^*|\int_\Omega  \int_0^T \int_D \int_0^t g_{21} (s) dW_2(s) \phi(\omega,t,x)dxdt d\mathbb{P}.
\end{split}
\end{equation*}

To show \eqref{limit10''}, let $(e_{i2}(x))_{i=1}^\infty$ and  $(\lambda_{i2})_{i=1}^\infty$ 
previously defined in \eqref{eqCT}. Denote by $h_i(s)=g_{22}(s) e_{i2} \in L^2(\partial O)$, 
for each $i\in \mathbb{Z}_+$ and $s\in [0,T]$. We infer from \eqref{eqCT} that
\begin{equation}
\label{hq}
\sup_{s\in[0,T]} \sum_{i=1}^\infty \lambda_{i2} ||h_i(s)||^2_{L^2(\partial O)} < +\infty.
\end{equation}
We will define $w_i(s)$ similarly as $w_1$ to be the unique element in $H^1(Y^*)$ that solves:
\begin{equation}
\label{wis}
\left\{
\begin{array}{rll}
-\Delta w_i(s)&= - \dfrac{1}{|Y^*|} \displaystyle\int_{\partial O} h_i(s) d\sigma &\mbox{ in }\  Y^* , \\
\displaystyle \frac{\partial w_i(s)}{\partial n} &=h_i(s)&\mbox{ on }\  \partial O, \\
\displaystyle \int_{Y^*} w_i(s) &=0,& \\
w_i(s)&- Y-periodic,& \\
\end{array}
\right.
\end{equation}
and $w_i^\e(s) = \e^2 w_i(s)\left( \dfrac{\cdot}{\e} \right)$ that will solve
\begin{equation}
\label{wise}
\left\{
\begin{array}{rll}
-\Delta w_i^\e(s)&=  - \dfrac{1}{|Y^*|} \displaystyle\int_{\partial O} h_i(s) d\sigma &\mbox{ in }\  D^\e , \\
\displaystyle \frac{\partial w_i^\e(s)}{\partial n} &=\e \mathcal{R}^\e h_i (s)&\mbox{ on }\  \partial O^\e. \\
\end{array}
\right.
\end{equation}

There exists a constant $C$ independent of $i$ and $s$ such that $||w_i(s)||_{H^1(Y^*)^n} \leq C ||h_i(s)||_{L^2(\partial O)^n}$ and using \eqref{hq} we also have:
\begin{equation}
\label{propwis}
\sup_{s\in[0,T]}\sum_{i=1}^\infty \lambda_{i2}||w_i(s)||^2_{H^1(Y^*)^n}  < \infty.\\
\end{equation}
Using the decomposition of $\phi^\e$\ and previously used computations, we have to show that:
\begin{equation*}
\int_{\Omega} \left|\int_{\partial O^\e} \int_0^t \e \mathcal{R}^\e g_{22} (s) dW_2(s) \phi_2(x') d\sigma(x') -\int_{\partial O} \int_0^t g_{22} (s) dW_2(s) d\sigma  \int_D\phi_2(x) dx \right|^2d\mathbb{P}
\end{equation*}
converges to $0$ uniformly for $t\in[0,T]$.
We perform the following calculations:

\begin{equation*}
\begin{split}
&\int_{\Omega}  \left|\int_{\partial O^\e} \int_0^t \e \mathcal{R}^\e g_{22} (s) dW_2(s) \phi_2(x') d\sigma(x') -\int_{\partial O} \int_0^t g_{22} (s) dW_2(s)d\sigma \int_D\phi_2(x) dx \right|^2d\mathbb{P}=\\
&\int_{\Omega}  \sum_{i=1}^\infty \lambda_{i2}\left|\int_{\partial O^\e} \int_0^t \e \mathcal{R}^\e g_{22} (s)e_{i2} d\beta_i(s) \phi_2(x') d\sigma(x') -\int_{\partial O} \int_0^t g_{22} (s)e_{i2} d\beta_i(s)d\sigma(x')  \int_D\phi_2(x) dx \right|^2d\mathbb{P}=\\
&\int_{\Omega}  \sum_{i=1}^\infty \lambda_{i2}\left|\int_0^t \int_{\partial O^\e}  \e h_i^\e(s) \phi_2(x')d\sigma(x')d\beta_i(s)  -\int_0^t\int_{\partial O} h_i(s) d\sigma  \int_D\phi_2(x) dxd\beta_i(s) \right |^2d\mathbb{P}=\\
 &\sum_{i=1}^\infty \lambda_{i2} \int_0^t ds \left|\int_{\partial O^\e}  \e h_i^\e(s) \phi_2(x')d\sigma(x')-\int_{\partial O} h_i(s) d\sigma  \int_D\phi_2(x) dx  \right|^2=\\
&\sum_{i=1}^\infty \lambda_{i2} \int_0^t ds \left|\int_{\partial O^\e}  \frac{\partial w_i^\e(s)}{\partial n} \phi_2(x')d\sigma(x')-\int_{\partial O} h_i(s)d\sigma \int_D\phi_2(x) dx  \right|^2=\\
&\sum_{i=1}^\infty \lambda_{i2} \int_0^t ds \left|\int_{D^\e}  \left(\Delta w_i^\e(s) \phi_2(x) 
+\nabla w_i^\e(s) \nabla \phi_2(x) \right)dx -\int_{\partial O} h_i(s) d\sigma \int_D\phi_2(x) dx  \right|^2
=\\
&\sum_{i=1}^\infty \lambda_{i2} \int_0^t ds \left|\int_{D}\int_{\partial O} h_i(s) d\sigma \left(\frac{1}{|Y^*|} \mathds{1}_{D^\e}(x)  -1\right)\phi_2(x) dx +\int_{D^\e}\nabla w_i^\e(s) \nabla \phi_2(x) dx \right|^2\leq\\
&C\sum_{i=1}^\infty \lambda_{i2} \int_0^t ds \left|\int_{D}\int_{\partial O} h_i(s) d\sigma\left(\mathds{1}_{D^\e}(x)  -|Y^*|\right)\phi_2(x) dx \right|^2 + C\sum_{i=1}^\infty \lambda_{i2} \int_0^t ds \left|\int_{D^\e}\nabla w_i^\e(s) dx \right|^2.
\end{split}
\end{equation*}

The second term of the sum will be bounded by 
$$C\e^2 \displaystyle\sum_{i=1}^\infty \lambda_{i2} \int_0^T ds \left|\int_{Y^*}\nabla w_i(s) dx\right|^2.$$ Using \eqref{propwis} it converges to $0$ uniformly for $t\in[0,T]$. 

We decompose the first term similarly to \eqref{limit9}, into

\begin{equation*}
\begin{split}
&\sum_{i=1}^N \lambda_{i2} \int_0^t ds \left|\int_{D}\int_{\partial O} h_i(s) d\sigma \left(\mathds{1}_{D^\e}(x)  -|Y^*|\right)\phi_2(x) dx \right|^2 +\\
& \sum_{i=N+1}^\infty \lambda_{i2} \int_0^t ds \left|\int_{D}\int_{\partial O} h_i(s) d\sigma \left(\mathds{1}_{D^\e}(x)  -|Y^*|\right)\phi_2(x) dx \right|^2.
\end{split}
\end{equation*}

The first sum goes to $0$ for any fixed $N$ because of the weak convergence to $0$ in $L^2(D)$ of $(\mathds{1}_{D^\e}(x)-|Y^*|)\phi_2(x)$. The second sum goes to $0$ when $N \to \infty$ because of \eqref{hq}.

Using the limits obtained in the variational formulation we get

\begin{equation}
\label{effeq2}
\begin{split}
&|Y^*| \int_\Omega\int_0^T\int_D u (\omega,t,x) \phi(\omega,t,x) dx dt d\mathbb{P} - |Y^*| \int_\Omega\int_0^T\int_D u_0 (\omega,x) \phi(\omega,t,x) dx dt d\mathbb{P}+\\
&\int_\Omega\int_0^T\int_D \int_{Y^*} \int_0^t \nu \left(\nabla_x u (\omega,s,x) + \nabla_y u_1 (\omega,s,x,y)  \right) \left(\nabla_x \phi (\omega,t,x) + \nabla_y \psi (\omega,t,x,y)\right) ds dy dx dtd\mathbb{P}-\\
&\int_\Omega\int_0^T \int_D \int_{Y^*} P (\omega,t,x,y) \left(\operatorname{div}_x \phi(\omega,x) + \operatorname{div}_y \psi(\omega,t,x,y)\right) dy dx dt d\mathbb{P}+\\
&|\partial O|\int_\Omega\int_0^T\int_D \int_0^t b u^\e (\omega,s,x) \phi(\omega,t,x) ds dx dt d\mathbb{P} =\\
&|Y^*|\int_\Omega\int_0^T \int_D  \int_0^t f(s,x) \phi(\omega,t,x) ds dx dt d\mathbb{P} + |Y^*|\int_\Omega\int_0^T \int_D  \int_0^t g_1 (s) dW_1(s) \phi(\omega,t.x) dx dt d\mathbb{P} +\\ 
&|\partial O|\int_\Omega\int_0^T\int_D \int_0^t g_{21} (s) dW_2(s) \phi(\omega,t,x) dx dt d\mathbb{P} +\\
&\int_\Omega\int_0^T\int_{\partial O} \int_0^t  g_{22}(s) dW_2(s) d\sigma\int_D \phi (\omega,t,x)dxdtd\mathbb{P}.
\end{split}
\end{equation}
\end{proof}

\subsection{The cell problem}
\

We separate the terms in \eqref{effeq2} according to $\phi$ and $\psi$ and consider their decompositions introduced in subsection \ref{subsection51}, which means that $U_1$ and $P$ satisfy the following equation for every $t\in[0,T]$, almost every $x\in D$ and $\omega\in\Omega$:

\begin{equation}
\label{U11}
\int_{Y^*} \nu\left(\nabla_x U (\omega,t,x) + \nabla_y U_1 (\omega,t,x,y)  \right) \nabla_y\psi_4(y) - P(\omega,t,x,y) \operatorname{div}_y \psi_4(y) dy =0,
\end{equation}
for every $\psi_4 \in C^\infty_{\#}(Y)^n$.

We have also, as a consequence of  \eqref{2s5} that 

\begin{equation}
\label{U12}
\operatorname{div}_x U(\omega,t,x)+\operatorname{div}_y U_1(\omega,t,x,y)=0,
\end{equation}
being the two scale limit of the sequence $\widetilde{\operatorname{div}_x U}^\e(\omega,t,x)$.

Equations \eqref{U11}--\eqref{U12} lead us to define for every $\Lambda \in \mathbb{R}^{n\times n}$, $w_\Lambda \in H^1_{\#}(Y^*)^n$ and $q_\Lambda\in L^2_{\#} (Y^*)$, as the solution of the following problem in $Y^*$:

\begin{equation}
\left\{
\begin{array}{rll}
\displaystyle\int_{Y^*} \nu\nabla_y \left( \Lambda y+w_\Lambda (y)  \right) \nabla_y\phi(y) - q_\Lambda (y) \operatorname{div}_y \phi(y) dy &=0 &\mbox{ for all } \phi \in H^1_{\#}(Y)^n\\
\operatorname{div}_y \left( \Lambda y + w_\Lambda (y) \right) & =0 &\mbox{ in } Y^*
\end{array}
\right.
\end{equation}
which is equivalent to the system:

\begin{equation}
\label{cellsystem}
\left\{
\begin{array}{rll}
-\nu\Delta w_\Lambda (y) + \nabla q_\Lambda (y) &=0 &\mbox{ in } Y^*\\

\operatorname{div}_y \left( \Lambda y + w_\Lambda (y) \right) & =0 &\mbox{ in } Y^*\\

\nu\dfrac{\partial \left(\Lambda y + w_\Lambda (y)\right)}{\partial n} - q_\Lambda (y) n&=0 & \mbox{ on } \partial O\\

w_\Lambda,\ q_\Lambda\  &Y-periodic\\

\displaystyle\int_{Y^*} w_\Lambda (y) dy &=0
\end{array}
\right.
\end{equation}

Define $\mathcal{C}$ the linear form on $\mathbb{R}^{n\times n}$, by:

\begin{equation}
\label{defC}
\mathcal{C} \Lambda = \int_{Y^*} \left(\nu\left(\Lambda+\nabla_y w_\Lambda (y)\right) -  q_\Lambda (y) I \right)dy,
\end{equation}
where $I \in \mathbb{R}^{n\times n}$ is the identity matrix.

Let $\mathbf{e}_{ij} \in \mathbb{R}^{n\times n}$ be defined by $(\mathbf{e}_{ij})_{kh}=\delta_{ik} \delta_{jh}$, for every $1 \leq k, h \leq n$, and let $w_{ij}$ and $q_{ij}$ be the solutions of the cell problem \eqref{cellsystem} corresponding to $\Lambda = \mathbf{e}_{ij}$. Then, by linearity:
\begin{equation}
\label{sollambda}
w_{\Lambda}(y)=\sum_{i, j=1}^n w_{ij}(y) \Lambda_{ij}\ \mbox{  and  }\ q_\Lambda (y)=\sum_{i, j=1}^n q_{ij} (y) \Lambda_{ij},
\end{equation}
and the linear form $\mathcal{C}$ can be given componentwise:
\begin{equation}
\label{defC1}
(\mathcal{C} \mf{e}_{ij})_{kh} = \int_{Y^*} \left( \nu\delta_{ik} \delta_{jh} + \nu\dfrac{\partial w_{ij}^k}{\partial y_h} (y) -q_{ij}\delta_{kh}\right) dy.
\end{equation}
\subsection{The homogenized system}
\label{subsection53}
\

According to  \eqref{sollambda} the solutions $U_1$ and $P$ of  \eqref{U11}--\eqref{U12} are given by
\begin{equation}
\label{U1}
U_1(\omega,t,x,y)=\sum_{i,j=1}^n w_{ij}(y) \dfrac{\partial (U)_i}{\partial x_j}(\omega,t,x),
\end{equation}

\begin{equation}
\label{P}
P(\omega,t,x,y)=\sum_{i,j=1}^n q_{ij}(y) \dfrac{\partial (U)_i}{\partial x_j}(\omega,t,x),
\end{equation}
and the equation \eqref{effeq2} becomes

\begin{equation}
\label{effeq3}
\begin{split}
&|Y^*| \int_D u (\omega,t,x) \phi_3(x) dx - |Y^*| \int_D u_0 (\omega,x) \phi_3(x) dx +\int_D \left( \mathcal{C} \nabla U (\omega,t,x) \right) \nabla \phi_3(x) dx-\\
&|\partial O|\int_D \int_0^t b u^\e (\omega,s,x) \phi_3(x) ds dx =|Y^*|\int_D  \int_0^t f(s,x) \phi_3(x) ds dx +\\
&|Y^*| \int_D  \int_0^t g_1 (s) dW_1(s) \phi_3(x) dx +|\partial O| \int_D \int_0^t g_{21} (s) dW_2(s) \phi_3(x) dx + \\
&\int_{\partial O} \int_0^t  g_{22}(s) dW_2(s) d\sigma \int_D \phi_3(x)dx.
\end{split}
\end{equation}
for every $\phi_3 \in C^\infty_0(D)^n$, for every $t\in[0,T]$ and a.s. $\omega\in\Omega$, which implies that $u$ is the solution of the following stochastic partial differential equation:

\begin{equation}
\label{eqhom}
\left\{
\begin{array}{rll}
d u (t) &=\left[\operatorname{div}_x \left(\mathcal{C} \nabla_x u (t) \right)+\dfrac{|\partial O|}{|Y^*|} b u (t)+f (t)\right] dt &\\
&+g_1 (t) d W_1(t) + \dfrac{1}{|Y^*|}\left( |\partial O| g_{21} (t)+\displaystyle\int_{\partial O} g_{22}(t)d\sigma \right) d W_2(t) &\mbox{in}\  D, \\
u(0)&=u_{0}&\mbox{in}\  D. \\
\end{array}
\right.
\end{equation}

If we denote by $u^*$ the weak limit in $L^2(\Omega\times[0,T]\times D)^n$ of the sequence $\wt{u}^\e$, according to \eqref{2s2}, then it will solve a similar equation:

\begin{equation}
\label{eqhom'}
\left\{
\begin{array}{rll}
d u^* (t) &=\left[\operatorname{div}_x \left(\mathcal{C} \nabla_x u^* (t) \right)+|\partial O| b u^* (t)+|Y^*|f (t)\right] dt &\\
&+|Y^*|g_1 (t) d W_1(t) + \left( |\partial O| g_{21} (t)+\displaystyle\int_{\partial O} g_{22}(t)d\sigma\right) d W_2(t) &\mbox{in}\  D, \\
u^*(0)&=|Y^*|u_{0}&\mbox{in}\  D. \\
\end{array}
\right.
\end{equation}


Let us define now the operator $A^* : D(A^*)\subset L^2(D)^n\mapsto L^2(D)^n$, by $A^* u = -\operatorname{div} \left(\mathcal{C} \nabla u\right)$ for every $u\in D(A^*)$, where
$$D(A^*)=H_0^1(D)^n\cap H^2(D)^n,$$
and $\mathcal{C}$ is introduced in \eqref{defC}.

\begin{lemma} (Properties of the operator $A^*$) The linear operator $A^*$ is positive and self-adjoint in $L^2(D)^n$.
\end{lemma}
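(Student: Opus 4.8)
The plan is to reduce the statement to two structural properties of the constant homogenized tensor $\mathcal{C}$ from \eqref{defC}: that it is \emph{symmetric}, in the sense $(\mathcal{C}\Lambda):M=(\mathcal{C}M):\Lambda$, and \emph{coercive}, i.e. $(\mathcal{C}\Lambda):\Lambda\ge \alpha|\Lambda|^2$ for some $\alpha>0$ and all $\Lambda\in\R^{n\times n}$. Granting these, the argument then mirrors the one given earlier for $\mf{A}^\e$.

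To analyze $\mathcal{C}$, for $\Lambda\in\R^{n\times n}$ I would set $\chi^\Lambda(y)=\Lambda y+w_\Lambda(y)$, so that $\nabla_y\chi^\Lambda=\Lambda+\nabla_y w_\Lambda$ and, by \eqref{cellsystem}, $\operatorname{div}_y\chi^\Lambda=0$ in $Y^*$ while $\operatorname{div}_y w_\Lambda=-\operatorname{tr}\Lambda$ (a constant). The key identity is obtained by inserting $w_M$ ($M\in\R^{n\times n}$) as a test function into the weak form of the cell problem for $\chi^\Lambda$, which gives
\[
\int_{Y^*}\nu\,\nabla_y\chi^\Lambda:\nabla_y w_M\,dy=\int_{Y^*}q_\Lambda\operatorname{div}_y w_M\,dy=-(\operatorname{tr}M)\int_{Y^*}q_\Lambda\,dy .
\]
Writing $M=\nabla_y\chi^M-\nabla_y w_M$ in $(\mathcal{C}\Lambda):M=\int_{Y^*}\big(\nu(\Lambda+\nabla_y w_\Lambda)-q_\Lambda I\big):M\,dy$ and using the identity above, the two pressure contributions cancel and one is left with the manifestly symmetric expression
\[
(\mathcal{C}\Lambda):M=\nu\int_{Y^*}\nabla_y\chi^\Lambda:\nabla_y\chi^M\,dy .
\]
Symmetry of $\mathcal{C}$ is then immediate, and $(\mathcal{C}\Lambda):\Lambda=\nu\int_{Y^*}|\nabla_y\chi^\Lambda|^2\,dy\ge0$; moreover equality forces $\nabla_y\chi^\Lambda=0$, hence $\chi^\Lambda$ constant on the connected set $Y^*$, hence $w_\Lambda=c-\Lambda y$, which can be $Y$-periodic only if $\Lambda=0$. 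Since $\Lambda\mapsto(\mathcal{C}\Lambda):\Lambda$ is a positive quadratic form on the finite-dimensional space $\R^{n\times n}$, it is coercive.

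With this at hand, set $a(u,v)=\int_D(\mathcal{C}\nabla u):\nabla v\,dx$ for $u,v\in H_0^1(D)^n$. By symmetry of $\mathcal{C}$, $a$ is a bounded symmetric bilinear form, and coercivity of $\mathcal{C}$ together with the Poincar\'e inequality gives $a(u,u)\ge\alpha\|\nabla u\|_{L^2(D)^{n\times n}}^2\ge c\|u\|_{H_0^1(D)^n}^2$. Integrating by parts, for $u\in D(A^*)=H_0^1(D)^n\cap H^2(D)^n$ one has $\langle A^*u,v\rangle_{L^2(D)^n}=a(u,v)$ for all $v\in H_0^1(D)^n$; in particular $A^*$ is symmetric and $\langle A^*u,u\rangle=a(u,u)\ge0$ (and $>0$ for $u\neq0$), which is positivity. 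For self-adjointness I would argue exactly as in the lemma on $\mf{A}^\e$: one checks that
\[
D(A^*)=\{u\in H_0^1(D)^n\ :\ |a(u,v)|\le C\|v\|_{L^2(D)^n}\ \text{ for all }v\in H_0^1(D)^n\},
\]
using that, $\mathcal{C}$ being strongly elliptic, the bound $|a(u,v)|\le C\|v\|_{L^2}$ for all $v$ is equivalent to $-\operatorname{div}(\mathcal{C}\nabla u)\in L^2(D)^n$, hence (constant coefficients, Dirichlet data) to $u\in H^2(D)^n$; then Proposition A.10, page 389, of \cite{DPZ} yields that $A^*$ is self-adjoint.

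The main obstacle is the first step: producing the symmetric representation of $\mathcal{C}$ and, in particular, the cancellation of the pressure terms $q_\Lambda$. This is the only place where the specific structure of the Stokes cell problem \eqref{cellsystem}—the divergence constraint on $\Lambda y+w_\Lambda$ and the traction boundary condition on $\partial O$—really enters, and it is where anything beyond routine form theory is needed; the passage from symmetry and coercivity of $\mathcal{C}$ to self-adjointness and positivity of $A^*$ is then standard.
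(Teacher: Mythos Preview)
Your proof is correct and follows essentially the same approach as the paper: both reduce to showing that $\mathcal{C}$ is symmetric and positive definite, and both obtain the key symmetric representation $(\mathcal{C}\Lambda):M=\nu\int_{Y^*}(\Lambda+\nabla_y w_\Lambda):(M+\nabla_y w_M)\,dy$ by testing the cell problem \eqref{cellsystem} for $\Lambda$ against $w_M$ and exploiting the divergence constraint to cancel the pressure terms. Your version is in fact more complete than the paper's, since you supply the kernel argument for strict coercivity and the form-theoretic passage to self-adjointness of $A^*$, both of which the paper leaves implicit.
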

\begin{proof}
All we have to show is that the operator $\mathcal{C}$ defined in \eqref{defC} is symmetric and positive definite. Let $\Lambda_1$ and $\Lambda_2$ be two matrices from $\mathbb{R}^{n\times n}$ and let us compute $\mathcal{C}\Lambda_1\cdot\Lambda_2$:
\begin{equation*}
\mathcal{C}\Lambda_1\cdot\Lambda_2=\int_{Y^*} \left(\nu\left(\Lambda_1+\nabla_y w_{\Lambda_1} (y)\right) -  q_{\Lambda_1} (y) I \right)\cdot \Lambda_2dy.
\end{equation*}
Using $w_{\Lambda_2}$ as a test function in \eqref{cellsystem}, for $\Lambda=\Lambda_1$ we get that:
\begin{equation*}
\int_{Y^*} \left(\nu\left(\Lambda_1+\nabla_y w_{\Lambda_1} (y)\right) -  q_{\Lambda_1} (y) I \right)\cdot \nabla w_{\Lambda_2}dy=0,
\end{equation*}
and given that $\operatorname{div}_y q_{\Lambda_1}(y) = \operatorname{div}_y q_{\Lambda_2}(y)=0$ in $Y^*$,
\begin{equation*}
\int_{Y^*} \left(\nu\nabla_y \left(\Lambda_1 y+w_{\Lambda_1} (y)\right)  \right)\cdot q_{\Lambda_2} (y)dy=\int_{Y^*} \left(\nu\nabla_y \left(\Lambda_2 y+w_{\Lambda_2} (y)\right)  \right)\cdot q_{\Lambda_1} (y)dy=0.
\end{equation*}

Using these relations, elementary calculations will give us that
\begin{equation}
\label{Csym}
\mathcal{C}\Lambda_1\cdot\Lambda_2=\int_{Y^*} \nu\left(\Lambda_1+\nabla_y w_{\Lambda_1} (y)\right)\cdot\left(\Lambda_2+\nabla_y w_{\Lambda_2} (y)\right) dy,
\end{equation}
which implies that $\mathcal{C}$ is symmetric and positive definite. As a consequence, we have the following existence result:
\end{proof}

\begin{theorem}\label{mildhom}
 (Well posedness of the homogenized equation) Let $S^*(t)_{t\geq 0}$ be the semigroup generated by the operator $A^*$ and assume that assumption \eqref{eqCT} is satisfied. Then, the equation \eqref{eqhom'} admits a unique mild solution $u^* \in L^2(\Omega; C([0,T],L^2(D)^n)\cap L^2(0,T;H_0^1(D)^n))$, given by
\begin{equation}\label{mildhomeq}
\begin{split}
u^*(t)=&|Y^*| S^*(t)u^*_{0}+|\partial O|\int_{0}^{t}S^*(t-s)bu^*(s)ds+|Y^*|\int_{0}^{t} S^*(t-s)f(s)ds\\
+&|Y^*|\int_{0}^{t}S^*(t-s)g_1(s)dW_1(s)+|\partial O|\int_{0}^{t}S^*(t-s)g_{21}(s)dW_2(s)\\
+&\int_{0}^{t}S^*(t-s)\displaystyle\int_{\partial O} g_{22}(s)d\sigma dW_2(s), \quad t\in [0,T].
\end{split}
\end{equation}
\end{theorem}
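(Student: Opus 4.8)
The plan is to recognize \eqref{eqhom'} as a semilinear stochastic evolution equation of the form $du^* + A^*u^*\,dt = \big(|\partial O|\,bu^* + |Y^*|f\big)dt + \Phi(t)\,d\mathbf{W}$ in the Hilbert space $L^2(D)^n$, and to apply the standard existence/uniqueness theory for mild solutions (Theorem 7.4 of \cite{DPZ}, exactly as used in Theorem \ref{mild}). First I would record that $A^*$ is a positive self-adjoint operator on $L^2(D)^n$ (the previous lemma), hence $-A^*$ generates an analytic contraction semigroup $S^*(t)$; this is the ingredient that makes the deterministic linear part well behaved. Then I would check that the diffusion coefficient, namely the operator $t\mapsto |Y^*|g_1(t)$ acting on $W_1$ together with $t\mapsto |\partial O|g_{21}(t) + \int_{\partial O}g_{22}(t)\,d\sigma$ acting on $W_2$, is an admissible (Hilbert--Schmidt, relative to $Q_1^{1/2}$ resp.\ $Q_2^{1/2}$) integrand: continuity in $t$ and the trace-class bounds are guaranteed by assumption \eqref{eqCT}, since $\|g_1(t)\|_{Q_1}^2$, $\|g_{21}(t)\|_{Q_2}^2$, $\|g_{22}(t)\|_{Q_2}^2$ are all uniformly bounded on $[0,T]$, and $\int_{\partial O}g_{22}(t)\,d\sigma$ is controlled by $\|g_{22}(t)\|_{L^2(\partial O)^n}$ times $|\partial O|^{1/2}$. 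The zeroth-order term $u^*\mapsto |\partial O|\,b u^*$ is bounded and linear on $L^2(D)^n$, hence globally Lipschitz, so it fits the semilinear framework with no difficulty.

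Given these verifications, Theorem 7.4 of \cite{DPZ} yields a unique mild solution $u^*\in L^2(\Omega;C([0,T],L^2(D)^n))$ satisfying precisely the variation-of-constants formula \eqref{mildhomeq}: one simply writes the linear bounded perturbation $|\partial O|\,bu^*$ and the forcing $|Y^*|f$ inside the Duhamel integral, and the two stochastic convolutions against $W_1$ and $W_2$ with the coefficients identified above. Uniqueness follows from a Gr\"onwall argument on $\mathbb{E}\|u_1^*(t)-u_2^*(t)\|_{L^2(D)^n}^2$ using the boundedness of $b$ and the $L^2$-isometry for the stochastic convolutions, exactly parallel to the contraction/fixed-point step in \cite{DPZ}.

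For the additional spatial regularity $u^*\in L^2(0,T;H_0^1(D)^n)$, the plan is to mimic the It\^o-formula computation in the proof of Theorem \ref{mild}: apply It\^o's formula to $\|u^*(t)\|_{L^2(D)^n}^2$, use the coercivity of $A^*$ coming from \eqref{Csym} (the form $\mathcal{C}\Lambda\cdot\Lambda = \int_{Y^*}\nu|\Lambda+\nabla_y w_\Lambda|^2\,dy \ge c|\Lambda|^2$ is bounded below, because $w_\Lambda$ is $Y$-periodic so $\int_{Y^*}\nabla_y w_\Lambda\,dy=0$ prevents the quadratic form from degenerating), absorb the bounded term $|\partial O|\,bu^*$, bound the forcing and the It\^o-correction term $\|\Phi(t)\|_{\mathrm{HS}}^2$ via \eqref{eqCT}, take expectations, and conclude with Gr\"onwall's lemma to obtain $\mathbb{E}\int_0^T\|u^*(s)\|_{H_0^1(D)^n}^2\,ds<\infty$. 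I expect the only mildly delicate point to be the strict coercivity of $\mathcal{C}$ (equivalently, a uniform lower bound on the quadratic form in \eqref{Csym}), which is needed to pass from the a priori energy estimate to membership in $L^2(0,T;H_0^1(D)^n)$; everything else is a routine transcription of the microscopic-model argument to the non-perforated domain $D$.
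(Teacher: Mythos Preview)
Your proposal is correct and matches the paper's approach: the paper does not give an explicit proof of Theorem \ref{mildhom} at all, but simply states it immediately after the lemma establishing that $A^*$ is positive self-adjoint, with the sentence ``As a consequence, we have the following existence result.'' The intended argument is precisely the one you outline---invoke Theorem 7.4 of \cite{DPZ} exactly as in Theorem \ref{mild}, using that $-A^*$ generates an analytic semigroup, that the noise coefficients are Hilbert--Schmidt by \eqref{eqCT}, and that the bounded perturbation $|\partial O|\,b u^*$ is globally Lipschitz; the $L^2(0,T;H_0^1(D)^n)$ regularity then follows from the It\^o/energy estimate parallel to \eqref{1}--\eqref{3}.

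One small caveat: your remark that $\int_{Y^*}\nabla_y w_\Lambda\,dy=0$ by periodicity is not literally correct, since the integral is over $Y^*$ rather than the full cell $Y$ (integration by parts leaves a boundary term on $\partial O$). Strict positive definiteness of $\mathcal{C}$ still holds---if $\Lambda+\nabla_y w_\Lambda\equiv 0$ in $Y^*$ then $\Lambda y + w_\Lambda$ is constant, contradicting the periodicity of $w_\Lambda$ unless $\Lambda=0$---but the justification is slightly different from the one you sketched.
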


\begin{remark}

\ 

\begin{enumerate}

\item The solution of \eqref{eqhom'} is not divergence free, so the homogenized equation does not contain a pressure term. However, the effect of the pressure $P^\e(t)$ appears implicitely through the matrix $\mathcal{C}$, hence the cell problem.

\item The overall effect of the boundary condition is seen in the homogenized equation through the Brinkman term $|\partial O| b u^*(t)$ and through an extra stochastic forcing acting in the volume, $\left( |\partial O| g_{21} (t)+\displaystyle\int_{\partial O} g_{22}(t)d\sigma\right) \dot {W_2}(t)$.

\item The convergence of the sequence $u^\e$ to the limit $u^*$ is strong in the probabilistic sense, but weak in the deterministic sense. In particular, the sequence $\wt{u}^\e$ of the extensions by $0$ of $u^\e$ inside $O^\e$ converges to $u^*$ weakly in $L^2(\Omega\times [0,T] \times D)^n$.

\end{enumerate}
\end{remark}

\end{document}